\documentclass{amsart}

\usepackage{amssymb} \usepackage{amsfonts} \usepackage{amsmath}
\usepackage{amsthm}
\usepackage{epsfig} 
\usepackage{color}
\usepackage{amscd}
\usepackage{comment}
\usepackage[all]{xy}
\usepackage[english]{babel}
\usepackage{hyperref}
\usepackage{amssymb, amsfonts, amsmath}
\usepackage{pgfplots}
\usepackage{comment} 
\usepackage{mathtools}
\usepackage{wrapfig}
\usepackage{tikz}
\usepackage{comment}
\usepackage{mathrsfs}
\usepackage{tabularx, hyperref}
\usepackage{amssymb} \usepackage{amsfonts} \usepackage{amsmath}
\usepackage{amsthm} \usepackage{epsfig, subfig}
\usepackage{ amscd, amsxtra, latexsym}
\usepackage{epsfig,  graphicx, psfrag}
\usepackage[all]{xy}
\usepackage{caption}
\usepackage{enumerate}
\usepackage{color}
\usepackage{yhmath}
\usepackage{tikz-cd}

\newtheorem{lemma}{Lemma}[section]
\newtheorem{thm}[lemma]{Theorem}
\newtheorem{prop}[lemma]{Proposition}
\newtheorem{cor}[lemma]{Corollary}

\newtheorem*{prop*}{Proposition}
\newtheorem{prop_intro}{Proposition}
\newtheorem{conj_intro}[prop_intro]{Conjecture}

\newtheorem{thm_intro}[prop_intro]{Theorem}

\theoremstyle{definition}

\newtheorem{defn}[lemma]{Definition}

\newtheorem{rem}[lemma]{Remark}

\theoremstyle{definition}

\newtheoremstyle{citing}
{3pt}
{3pt}
{\itshape}
{}
{\bfseries}
{.}
{.5em}
{\thmnote{#3}}
\theoremstyle{citing}

\newcommand{\matN}{\ensuremath {\mathbb{N}}}
\newcommand{\matR} {\ensuremath {\mathbb{R}}}

\newcommand{\calT} {\ensuremath {\mathcal{T}}}

\newcommand{\ori}{{\textrm{Or}}}

\newcommand{\str} {\ensuremath {{\mathrm{str}}}}

\newcommand{\omeo}{\operatorname{Homeo}^+(S^1)}

\DeclareMathOperator{\isom}{Isom}

\newcommand{\commento}[1]{}

\author{Roberto Frigerio}
\address{Dipartimento di Matematica, Largo Pontecorvo 5, 56127 Pisa, Italy}
\email{roberto.frigerio@unipi.it}

\author{Ervin Had\v{z}iosmanovi\'c}
\address{Scuola Normale Superiore, 
Piazza dei Cavalieri 7, 56126, Pisa, Italy}
\email{ervin.hadziosmanovic@sns.it}

\title[]{The bounded area class\\ of negatively curved surfaces}

\keywords{}

\begin{document}

\begin{abstract}
Let $S$ be an oriented surface, possibly of infinite type, endowed with a complete Riemannian metric with pinched negative curvature. We prove that the 
area form defines a non-trivial class in the second bounded cohomology group of $S$, unless $S$ is diffeomorphic to the disc or the cylinder. This is in sharp contrast with the $n$-dimensional case, $n>2$, where the non-triviality of the volume form in bounded cohomology is related to the Cheeger constant of the manifold. 
We also discuss how the bounded area class  depends on the metric: we prove that, for compact surfaces, it recognizes constant curvature metrics among pinched negatively curved ones, while, even in the case of surfaces of infinite type, it 
does not distinguish non-isometric hyperbolic structures. More precisely, when $S$ is compact we show that, among the negatively curved structures of fixed area, the ones with constant curvature provide the unique minimizers for the norm of the bounded area class.
\end{abstract}

\maketitle

\section*{Introduction}
Let $S$ be an oriented connected surface, possibly of infinite type, endowed with a complete Riemannian metric $g$ with pinched negative curvature.
Negative curvature allows one to define a \emph{straightening procedure} 
which deforms any singular simplex with values in $S$ into a geodesic one.
When the curvature is bounded away from 0, 
straight simplices have a uniformly  bounded area. Hence, integration of the area form of $g$ on straight 2-dimensional simplices defines a singular cocycle
${\omega}_g\in C^2(S)$ which is \emph{bounded}, i.e.~which attains uniformly bounded values on singular simplices (see Section~\ref{prelim:sec} for the details). Therefore, the bounded cocycle $\omega_g$ defines  the \emph{bounded area class} $[\omega_g]$ in the second bounded cohomology  group with real coefficients $H^2_b(S)$.  
In Section~\ref{nontrivial:sec}  we prove the following:

\begin{thm_intro}\label{main:surface:thm}
Let  $g$ be a complete Riemannian metric with pinched negative curvature  on the oriented and connected surface $S$. Then
$[\omega_g]\neq 0$ in $H^2_b(S)$, unless $S$ is diffeomorphic to $\mathbb{R}^2$ or to $S^1\times \mathbb{R}$ (in which cases
$[\omega_g]= 0$).
\end{thm_intro}

We stress here that in our Theorem~\ref{main:surface:thm} we do not make any finiteness assumption on $S$, which may well be of infinite type.

Theorem~\ref{main:surface:thm} is in sharp contrast with what happens in higher dimensions. For example, while it is well known that 
the bounded volume class of a complete finite-volume pinched negatively curved $n$-manifold $M$ is non-trivial in $H^n_b(M)$, things get more complicated
when considering manifolds with infinite volume.
In~\cite{soma1997bounded}, Soma proved that the bounded volume class of an infinite volume hyperbolic $3$-manifold with finitely generated fundamental group vanishes if and only if the manifold is \emph{geometrically finite} (he actually considered topologically tame hyperbolic $3$-manifolds,
but by the solution of Marden's Tameness Conjecture by Agol \cite{agol2004tameness} and Calegari-Gabai \cite{calegari2006shrinkwrapping}, this is now known to be equivalent to considering 
manifolds with a finitely generated fundamental group). This implies in turn (thanks to results by Hamenst\"adt \cite{hamenstadt2004small}, Ledoux~\cite{ledoux1994simple}
and Bonahon and Canary~\cite{bonahon1986bouts,Canary}) that the bounded volume class of an infinite volume hyperbolic $3$-manifold $M$ with finitely generated fundamental group vanishes if and only if the Cheeger constant $h(M)$ of $M$ vanishes
(roughly speaking, the vanishing of $h(M)$ means that compact domains in $M$ do \emph{not} satisfy a linear isoperimetric inequality).

In more recent years, Kim and Kim extended this last statement to the context of 
pinched negatively curved $3$-manifolds of bounded geometry~\cite[Theorem 1.4]{kim2015bounded} and of
higher-dimensional locally symmetric spaces of rank one~\cite[Theorem 1.2]{kim2015bounded}.
These results lead them to the formulation of the following:
\begin{conj_intro}[\cite{kim2015bounded}]\label{conjecture}
	Let $(M,g)$ be a complete pinched negatively curved manifold of infinite volume and dimension $n\ge3$. Then the bounded volume class $[\omega_g] \in H^{n}_b(M)$ vanishes if and only if $h(M)>0$.
\end{conj_intro}

The second author has recently proved this conjecture  for  negatively curved Riemannian manifolds of bounded geometry with curvature bounded away from 0~\cite[Theorem 1]{hadziosmanovic2025bounded}, but the general case is still open.

Our Theorem~\ref{main:surface:thm} shows that no analogue of these results holds in dimension 2.

\subsection*{Dependence on the metric: hyperbolic surfaces}
An interesting topic is
the dependence of the bounded volume class $[\omega_g]$ on the Riemannian metric $g$.
The case of hyperbolic $3$-dimensional manifolds has been extensively studied.
Generalising the result of Soma mentioned above, Farre \cite{farre2018bounded} proved that if two geometrically infinite hyperbolic structures on the $3$-manifold $M$ have different ending laminations, then the induced bounded volume classes are linearly independent in $H^3_b(M)$ (see also \cite{farre2020relations}). 
Thus, by varying the hyperbolic metrics on $M$ one can obtain an infinite dimensional subspace of $H^3_b(M)$ spanned by bounded volume classes. 
The situation is different in the $2$-dimensional case. In fact, we provide here a complete proof of the following result: 

\begin{thm_intro}\label{hyperbolic:equal}
Let $S$ be a surface (possibly of infinite type), and 
let $g_1$ and $g_2$ be two complete hyperbolic structures on $S$. Then  $[\omega_{g_1}]=[\omega_{g_2}]$
in $H^2_b(S)$.
\end{thm_intro}

If $S$ is hyperbolic, the area of every ideal triangle in the universal covering $\widetilde{S}$ is equal to $\pi$. This easily implies that the bounded area class is equal (up to a universal scaling factor) to the 
real bounded Euler class of the action of $\pi_1(S)$ on the circle $\partial\widetilde{S}=\partial \mathbb{H}^2$. 
A fundamental result by Ghys now ensures that two circle actions
of the same group have the same integral bounded Euler class
 if and only if they are semi-conjugate. Therefore, Theorem~\ref{hyperbolic:equal} may be deduced from the following:

\begin{thm_intro}\label{semi-conj:thm}
Let $S$ be an orientable surface (possibly of infinite type), 
let $g_1$ and $g_2$ be two complete pinched negatively curved structures on $S$, and let $\rho_i\colon \pi_1(S)\to \operatorname{Homeo}^+(\partial\widetilde{S}_i)$, $i=1,2$,
be the induced circle actions, where $ \widetilde{S}_i$ 
is the Riemannian universal covering of $(S,g_i)$. Then $\rho_1$ and $\rho_2$ are semi-conjugate.
\end{thm_intro}

Theorem~\ref{semi-conj:thm} (and thus Theorem ~\ref{hyperbolic:equal}) is known  in the case when $S$ is a surface of finite type (see~\cite[Corollaries 4.3 and 4.5]{BIWsurvey} and the proof of Theorem 3 in~\cite{BIW}). However, for surfaces of infinite type 
there does not seem any available reference for it. We provide a complete proof of Theorem~\ref{semi-conj:thm}
in
Section~\ref{semi-conj:sec}, while in Section~\ref{euler:sec} we show how to deduce Theorem~\ref{hyperbolic:equal} from Theorem~\ref{semi-conj:thm}.

\subsection*{Dependence on the metric: surfaces with non-constant curvature}
Theorem~\ref{hyperbolic:equal} shows that the bounded area class cannot distinguish non-isometric hyperbolic structures on the same surface. 
However, at least for closed surfaces, it recognizes hyperbolic structures among all pinched negatively curved structures:

\begin{thm_intro}\label{recognizing:hyperbolic}
Let $S$ be a closed oriented surface, and 
$g_1$ and $g_2$ be  negatively curved Riemannian metrics on $S$ such that 
$[\omega_{g_1}]=[\omega_{g_2}]$ in $H^2_b(S)$. Then $g_1$ is hyperbolic if and only if $g_2$ is hyperbolic.
\end{thm_intro}

Building on Theorem~\ref{recognizing:hyperbolic} and on an extremality property for the real bounded Euler class (see~\cite[Theorem 4.14]{BIWsurvey}) we also obtain 
the following:

\begin{thm_intro}\label{extremal:thm}
Let $S$ be a closed oriented surface, and let $g$ be a negatively curved metric on $S$ whose area is equal to $2\pi |\chi(S)|$. Then
$\|\omega_g\|_\infty\geq \pi$,  and the equality
$$
\|\omega_g\|_\infty=\pi
$$
holds if and only if $g$ is hyperbolic.
\end{thm_intro}

Theorem~\ref{extremal:thm} can be viewed as a bounded-cohomological analogue of classical geometric rigidity theorems for negatively curved manifolds, such
as Katok's entropy rigidity \cite{katok1982}
or, in higher dimensions, Besson--Courtois--Gallot's minimal volume entropy rigidity \cite{BCG1995}. Katok's entropy rigidity for surfaces characterizes metrics of constant negative curvature 
as minimizers for the topological entropy,
while
our result characterizes them as minimizers for  the  norm of the bounded area form.

At least when $S$ is closed, while  all the hyperbolic metrics on $S$ define
the same bounded area class, 
the bounded area classes associated to non-hyperbolic metrics span a
large subspace of $H^2_b(S)$: 

\begin{thm_intro}\label{MCG:action}
Let $(S,g)$ be a closed negatively curved surface, and suppose that the curvature of $g$ is not constant. Then
the orbit of $[{\omega}_g]$ in $H^2_b(S)$ via the action of the mapping class group of $S$ spans an infinite dimensional
subspace of $H^2_b(S)$. In particular, the subspace of $H^2_b(S)$ spanned by the bounded area classes of all negatively curved metrics on $S$ is infinite dimensional.
\end{thm_intro}

Theorems~\ref{recognizing:hyperbolic}, \ref{extremal:thm} and~\ref{MCG:action} are proved in Section~\ref{nonhyp:sec}.

\subsection*{Acknowledgements}  Both authors are partially
supported by INdAM through GNSAGA.

\section{The bounded area class}\label{prelim:sec}

\subsection{Bounded cohomology of spaces} 
Let $X$ be a topological space. 
In this paper we consider only (co)homology with real coefficients, hence
we denote by $(C_\bullet(X),\delta^\bullet)$ (resp.~$(C^\bullet(X),\delta^\bullet)$) the complex of singular chains (resp.~cochains) on $X$ with real coefficients, and by $H_\bullet(X)$ (resp.~$H^\bullet(X)$) the corresponding (co)homology modules. 
Let $S_n(X)$ denote the set of singular $n$-simplices with values in $X$.
We define an $\ell^\infty$-norm on cochains as follows:
if $\varphi\in C^n(X)$, then
$$
\|\varphi\|_\infty=\sup \{|\varphi(\sigma)|\, ,\ \sigma\in S_n(X)\}\ \in\ [0,+\infty]\ .
$$
 We say that $\varphi\in C^n(X)$ is \emph{bounded} if $\|\varphi\|_\infty<\infty$, and we denote by $C^n_b(X)$ the subspace of bounded $n$-cochains. 
 It is easy to check that, for every $\varphi\in C^n(X)$, one has $\|\delta^n \varphi\|_\infty\leq (n+1) \|\varphi\|_\infty$, hence
 $C^\bullet_b(X)$ is a subcomplex of $C^\bullet(X)$, whose cohomology is called the \emph{bounded cohomology} of $X$, and is denoted by
 $H^\bullet_b(X)$. The norm $\|\cdot \|_\infty$ on the space of cocycles in $C_b^n(X)$ descends to a quotient seminorm on $H^n_b(X)$, which will still be denoted by
 $\|\cdot \|_\infty$.
 
The $\ell^\infty$-norm on $C_b^n(X)$ is dual to the $\ell^1$-norm on $C_n(X)$, which is defined by
$$
\left\|\sum_{\sigma \in S_n(X)} a_\sigma \cdot \sigma\right\|_1=\sum_{\sigma \in S_n(X)} |a_\sigma|\ .
$$
The restriction of this norm to the subspace of cycles descends to a seminorm on $H_n(X)$, which will stil be denoted by $\|\cdot\|_1$. 
The obvious pairing between $C^n_b(X)$ and $C_n(X)$ induces a pairing
$$
\langle \cdot, \cdot, \rangle\colon H^n_b(X)\to H_n(X)\to \mathbb{R}\ ,
$$
 which is usually called \emph{Kronecker pairing}.

When $X$ is a closed oriented manifold, we denote by $[X]\in H_n(X)$ the \emph{real fundamental class} of $X$, i.e.~the image of the integral fundamental class of $X$ via the change of coefficient morphism. 

 \subsection{Straightening in negative curvature}
Let $S$ be a connected oriented surface,  endowed with a complete pinched negatively curved  Riemannian structure $g$.
One can \emph{straighten} simplices in 
the Riemannian universal covering $(\widetilde{S},\widetilde{g})$ of $(S,g)$ as follows: 
if $\sigma\colon \Delta^0\to \widetilde{S}$ is a $0$-simplex, then one simply sets $\widetilde{\str}_0(\sigma)=\sigma$;
if $\sigma\colon [0,1]\to \widetilde{S}$ is a $1$-simplex, then $\widetilde{\str}_1(\sigma)\colon  [0,1]\to \widetilde{S}$
is the constant speed parametrization of the geodesic starting at $\sigma(0)$ and ending at $\sigma(1)$; 
if $v_0,v_1,v_2$ are the vertices of the standard simplex $\Delta^2$ and $\sigma\colon \Delta^2\to \widetilde{S}$ is a $2$-simplex,
then 
the restriction of $\widetilde{\str}_2 (\sigma)$ to the edge $[v_0,v_1]\subseteq \Delta^2$ with endpoints $v_0,v_1$
is equal to $\widetilde{\str}_1(\sigma|_{[v_0,v_1]})$, and the restriction of $\widetilde{\str}_2 (\sigma)$ to
any segment $[z,v_2]\subseteq \Delta^2$, $z\in [v_0,v_1]$, is the constant speed
parameterization of the geodesic joining $\widetilde{\str}_{1}(\sigma|_{[v_0,v_1]}(z))$ to $\sigma(v_2)$.
In other words, $\widetilde{\str}_2 (\sigma)$ is the geodesic cone  from the vertex $\sigma(v_2)$ over the geodesic joining
$\sigma(v_0)$ to $\sigma(v_1)$.
In this construction,
negative curvature plays an essential role in ensuring that the geodesic joining two points in $\widetilde{S}$ is unique and smoothly depends on its endpoints, so that the coning procedure indeed provides a well-defined smooth map (see e.g.~\cite[Proposition 2.4]{lohsauer}).  
 
 For any singular simplex $\sigma\colon \Delta^k\to S$, $k=0,1,2$, the \emph{straightening} ${\str}_k(\sigma)\colon \Delta^k\to {S}$ of ${\sigma}$
is defined as the projection in $S$ of $\str_k(\widetilde{\sigma})$, where $\widetilde{\sigma}$ is a lift of $\sigma$ to $\widetilde{S}$. It is not difficult to show
that  $\str_k(\widetilde{\sigma})$ does not depend on the choice of the lift $\widetilde{\sigma}$. The straightening procedure can be defined similarly (by inductively coning over
the last vertex of the standard simplex) for simplices of any dimension, thus defining a chain map
$$
\str_\bullet\colon C_\bullet (S)\to C_\bullet(S)
$$
 chain-homotopic to the identity (see e.g.~\cite[Section 8.7]{Frigeriobook}). Chains (or simplices) lying in the image of $\str_\bullet$ are called \emph{straight}.

Recall now that $S$ is oriented, and  denote by $dA_g$ the area form of $S$ associated to the Riemannian metric $g$.
A direct consequence of the Gauss-Bonnet formula is that the supremum of the areas of straight triangles 
in a surface with negative curvature bounded away from 0 
is finite. Therefore, the singular cochain
$$
\omega_g \in C^2(S)\, ,\qquad \omega_g(\sigma)=\int_{\str_2(\sigma)}dA_g
$$
is bounded. Moreover, Stokes' Theorem and the fact that $\str_\bullet$ is a chain map imply that the cochain $\omega_g$ is a cocycle.
We can thus denote by $[\omega_g]\in H^2_b(S)$ the bounded cohomology class represented by $\omega_g$.

As already observed in~\cite{barge1988surfaces}, the bounded class $[\omega_g]$ 
can also be described by integration on ideal triangles with vertices on the boundary at infinity $\partial \widetilde{S}$, rather than on straight triangles with vertices inside $\widetilde{S}$.
In order to make this statement precise, one needs to switch from the bounded cohomology of $S$ to the bounded cohomology of its fundamental group.

\subsection{Bounded cohomology of groups}\label{group:sub}
Let $G$  be a discrete group.
The \emph{bound\-ed cohomology} of $G$ (with real coefficients), denoted by $H^\bullet_b(G)$, is defined as the cohomology of the following complex of vector spaces
\[
0 \rightarrow C^0_{b}(G)^G \rightarrow C^1_{b}(G)^G\rightarrow C^2_{b}(G)^G \rightarrow \cdots\, ,
\]
where $C^n_{b}(G)^G$ denotes the space of bounded $G$-invariant maps from $G^{n+1}$ to $\matR$, the differential maps are defined by $$(\delta \varphi)(\gamma_0,\dots,\gamma_n) = \sum_{i=0}^n(-1)^i\varphi(\dots,\widehat{\gamma_i},\dots)\, ,$$
and the action of $G$ on $C^n_b(G)$ is given by $(\gamma\cdot \varphi)(\gamma_0,\dots,\gamma_n)=\varphi(\gamma^{-1}\gamma_0,\dots, \gamma^{-1}\gamma_n)$.
The $\ell^\infty$-norm  on the space of cocycles in $C^n_{b}(G)^G$ descends to a quotient seminorm on $H^n_b(G)$, which will be denoted by
 $\|\cdot \|_\infty$.

A celebrated theorem due to Gromov ensures that the bounded cohomology of a topological space is isometrically isomorphic to the bounded cohomology of its fundamental group.
In our case of interest, i.e.~ when $S=\widetilde{S}/\Gamma$, $\Gamma\cong \pi_1(S)$, is a complete pinched negatively curved surface, Gromov's isomorphism in degree 2
$$
\theta\colon H^2_b(S)\to H^2_b(\Gamma)
$$
admits a very explicit description, which we will 
exploit in our study of the bounded area class.

Let  $\calT$ be the set of elements $(\xi_0,\xi_1,\xi_2)\in (\partial\widetilde{S})^3$ such that $\xi_i\neq \xi_j$ for $i\neq j$. 
Thus, $\mathcal{T}$ parametrizes the set of ideal triangles
with ordered vertices in $\widetilde{S}$. For every $(\xi_0,\xi_1,\xi_2)\in \mathcal{T}$, we denote by  $T(\xi_0,\xi_1,\xi_2)$ an
\emph{oriented} parametrization  of the ideal triangle $T$ with vertices $\xi_0,\xi_1,\xi_2$, i.e.~the restriction to $\Delta^2\setminus\{v_0,v_1,v_2\}$ of a parametrization $\sigma\colon \Delta^2 \to \overline{T}\subseteq \widetilde{S}\cup \partial\widetilde{S}$ 
sending $v_i$ to $\xi_i$ for $i=0,1,2$, where $v_0,v_1,v_2$ are the vertices of $\Delta^2$.

Let now $\xi\in\partial\widetilde{S}$ be a fixed basepoint. We consider the cochain
$\omega_{g,\xi}\in C^2_b(\Gamma)^\Gamma$ defined  by
$$
\omega_{g,\xi}(\gamma_0,\gamma_1,\gamma_2)=\left\{\begin{array}{ll} \int_{T(\gamma_0 {\xi}, \gamma_1 {\xi}, \gamma_2 {\xi})}dA_{\widetilde{g}} \quad & \text{if}\ 
	(\gamma_0 {\xi}, \gamma_1 {\xi}, \gamma_2 {\xi})\in\mathcal{T}\\
	0 \quad &\text{otherwise}\ ,\end{array}\right.
$$
where $dA_{\widetilde{g}}$ is the area form on $\widetilde{S}$ (i.e., the pull-back to $\widetilde{S}$ of $dA_g$).
In other words, $\omega_{g,\xi}(\gamma_0,\gamma_1,\gamma_2)$ is the signed area of the ideal triangle $T(\gamma_0 {\xi}, \gamma_1 {\xi}, \gamma_2 {\xi})$,
where the sign depends on the cyclic ordering of $\gamma_0 {\xi}, \gamma_1 {\xi}, \gamma_2 {\xi}$ on $\partial \widetilde{S}$.
The following result was proved by Barge and Ghys, and allows us
to represent the class $\theta([\omega_g])$ ``on the boundary'' of $\widetilde{S}$.

\begin{lemma}[{\cite[Lemma 3.10]{barge1988surfaces}}]\label{boundaryrep:lemma}
	For every $\xi\in\partial\widetilde{S}$, the group cochain $\omega_{g,\xi}$ is a bounded cocycle representing
	the class $\theta([\omega_g])\in H^2_b(\Gamma)$.
\end{lemma}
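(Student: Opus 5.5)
We need to show two things: $\omega_{g,\xi}$ is a bounded cocycle, and it is cohomologous in $C^\bullet_b(\Gamma)^\Gamma$ to the cocycle obtained from $\omega_g$ through Gromov's isomorphism $\theta$.

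\textbf{Explicit description of $\theta$.} Fix a basepoint $x\in\widetilde S$. The class $\theta([\omega_g])$ is represented by
$$
\omega_{g,x}(\gamma_0,\gamma_1,\gamma_2)=\int_{T(\gamma_0x,\gamma_1x,\gamma_2x)}dA_{\widetilde g},
$$
where $T(p_0,p_1,p_2)$ now denotes the straight (geodesic) triangle with vertices $p_i\in\widetilde S$. To see this, I will use the standard model of $\theta$: a $\Gamma$-invariant cochain on $\widetilde S$ is evaluated on the singular simplex obtained by straightening any simplex with vertices $\gamma_0x,\gamma_1x,\gamma_2x$. Since $\omega_g$ only sees straightenings, the outcome is exactly $\omega_{g,x}$.

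\textbf{Bounds and invariance.}
\begin{itemize}
\item $\Gamma$-invariance of $\omega_{g,x}$ and $\omega_{g,\xi}$ follows because $\Gamma$ acts by orientation-preserving isometries.
\item Boundedness of $\omega_{g,x}$ is the Gauss--Bonnet bound: a geodesic triangle has area at most $\pi/a$ if $K\le -a<0$.
\item The same bound holds for ideal triangles: exhaust them by finite triangles. Ideal triangles with distinct vertices have finite area by pinching and convergence of geodesics.
\item The cocycle identity for $\omega_{g,\xi}$ holds for four pairwise distinct points because the signed areas of the faces of an ideal tetrahedron sum to zero (additivity of area for the two triangulations of an ideal quadrilateral). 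When some vertices coincide, check the degenerate cases directly. A triangle with two equal vertices gets value $0$, and the remaining faces then cancel in pairs with opposite orientations.
\end{itemize}

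\textbf{Comparing the two cocycles.} Write $\omega_{g,\xi}-\omega_{g,x}=\delta\beta$ for a bounded invariant $1$-cochain $\beta$. The natural choice is
$$
\beta(\gamma_0,\gamma_1)=\text{signed area of the ``quadrilateral'' with vertices } \gamma_0x,\ \gamma_1x,\ \gamma_1\xi,\ \gamma_0\xi.
$$
Concretely, $\beta(\gamma_0,\gamma_1)$ is the sum of the areas of the two triangles $T(\gamma_0x,\gamma_1x,\gamma_1\xi)$ and $T(\gamma_0x,\gamma_1\xi,\gamma_0\xi)$, each with vertices in $\widetilde S\cup\partial\widetilde S$. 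Each such mixed triangle has area at most $\pi/a$ by Gauss--Bonnet, so $\beta$ is bounded, and it is clearly $\Gamma$-invariant.

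The identity $\delta\beta=\omega_{g,\xi}-\omega_{g,x}$ is a Stokes/additivity argument on the prism with top $T(\gamma_i\xi)$ and bottom $T(\gamma_ix)$. Equivalently, it is the cocycle relation for the area cochain on mixed triangles, applied to the decomposition of the prism into three tetrahedra. The mixed cochain $c(p_0,p_1,p_2)=$ signed area of $T(p_0,p_1,p_2)$, for $p_i\in\widetilde S\cup\partial\widetilde S$, is an alternating cocycle on $4$-tuples for the same reason as above. Then $\omega_{g,\xi}-\omega_{g,x}$ is its coboundary via the usual prism operator.

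\textbf{Main obstacle.} The delicate step is the degenerate configurations: $\gamma_i\xi=\gamma_j\xi$ when $\xi$ is fixed by some element of $\Gamma$, and geodesic triangles with coincident vertices. The convention assigning $0$ must be compatible with the alternating cocycle $c$. I would handle this by defining $c$ to be $0$ on degenerate triples and checking that area additivity still gives $\delta c=0$ there; a triangle with two equal ideal vertices has zero area anyway. If this becomes messy, an alternative is to choose $\xi$ not fixed by any nontrivial element, prove the statement for such $\xi$, and then compare two boundary points $\xi,\xi'$ by the same prism argument.
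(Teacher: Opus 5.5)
Your argument is correct. The paper gives no proof of this lemma; it simply imports it from Barge--Ghys. What you wrote is the standard argument behind that citation: represent $\theta([\omega_g])$ by the orbit cocycle $\omega_{g,x}$ of an interior basepoint via straight simplices. Then move the basepoint to $\xi$ using the prism homotopy built from the bounded, $\Gamma$-invariant, alternating signed-area cochain $c$ on $\widetilde S\cup\partial\widetilde S$.

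Three small points are worth tightening.
\begin{enumerate}
\item With your $\beta$ one actually gets $\delta\beta=\omega_{g,x}-\omega_{g,\xi}$, because the top face of the prism enters with the opposite orientation. This is harmless.
\item The cocycle identity for $c$ on mixed $4$-tuples is not covered by the ``two triangulations of a quadrilateral'' picture alone. For instance, $\gamma_0x$ may lie inside the ideal triangle with vertices $\gamma_0\xi,\gamma_1\xi,\gamma_2\xi$, or three of the points may lie on one geodesic. A clean way to get $\delta c=0$ in every configuration at once is to write $c(p_0,p_1,p_2)=\int_{\widetilde S}\mathrm{ind}(\partial T(p_0,p_1,p_2),y)\,dA_{\widetilde g}(y)$, with the winding number taken in the closed disc $\widetilde S\cup\partial\widetilde S$. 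The boundaries of the four faces cancel as $1$-chains of oriented geodesic arcs, so the sum vanishes.
\item Since $c$ is alternating, so is $\delta c$, and an alternating function vanishes automatically on tuples with a repeated entry. So the degenerate configurations you single out as the main obstacle need no separate treatment.
\end{enumerate}
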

\qed

\subsection{Bounded cocycles as measurable functions}\label{sec: bc as meas fun}
We are now going to exploit some fundamental results concerning the use of amenable actions
for the computation of bounded cohomology.  Let $S$ and $\Gamma$ be as above. 
We endow $\partial\widetilde{S}$ with the measure class $\mu$ 
of the  visual measure associated to any basepoint in $\widetilde{S}$.

It is well known (and very easy to check using e.g.~that every element of $\Gamma$ acts as a diffeomorphism of $\partial \widetilde{S}$) 
that $(\partial\widetilde{S},\mu)$ is a regular $\Gamma$-space according to~\cite[Definition 2.1.1]{Monod}.
Moreover, the action of $\Gamma$ on $\partial \widetilde{S}$ is \emph{Zimmer amenable} (we refer to~\cite[Chapter 4]{zimmer} for the definition of this notion): 
in fact, since $\Gamma$ is a discrete subgroup of $\operatorname{Isom}^+(\widetilde{S})$, {\cite[Theorem 3.1]{spatzier} readily implies the following:
	
	\begin{lemma}
		\label{amenability}
		The space $(\partial\widetilde{S},\mu)$ is an amenable regular $\Gamma$-space.
	\end{lemma}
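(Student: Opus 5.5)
The plan is to check the two parts of the statement separately: first that $(\partial\widetilde{S},\mu)$ is a regular $\Gamma$-space, then that the action is Zimmer amenable. For regularity, in the sense of \cite[Definition 2.1.1]{Monod}, we need a standard Borel probability space on which $\Gamma$ acts measurably and preserves the measure class. Here $\partial\widetilde{S}$ is a circle, so it is a compact metrizable space and hence standard Borel. The visual measures $\mu_x$ for different basepoints $x\in\widetilde{S}$ are mutually absolutely continuous, with Radon--Nikodym derivatives given by exponentials of Busemann functions. These derivatives are continuous and bounded away from $0$ and $\infty$. Since $\gamma_*\mu_x=\mu_{\gamma x}$ for every isometry $\gamma$, the class $\mu$ is $\Gamma$-quasi-invariant. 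Finally, $\Gamma$ is countable and acts by homeomorphisms, so the action is measurable. This settles regularity.

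For amenability, the plan is to reduce to the statement in \cite{spatzier}. That theorem says that for a simply connected complete manifold $X$ with pinched negative curvature, the action of $\operatorname{Isom}(X)$, or of any closed (in particular discrete) subgroup of it, on $\partial X$ with the visual measure class is Zimmer amenable. We apply it with $X=\widetilde{S}$ and with $\Gamma\cong\pi_1(S)$ acting by deck transformations. This $\Gamma$ is a discrete subgroup of $\operatorname{Isom}^+(\widetilde{S})$, and the curvature of $\widetilde g$ is pinched because it is pulled back from $g$. The measure class in that theorem is the same visual class $\mu$ used here, so the action of $\Gamma$ on $(\partial\widetilde{S},\mu)$ is amenable.

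If one preferred not to apply the theorem to the discrete subgroup directly, there is a fallback. One would use the fact that amenability passes from a locally compact group $G$ to its closed subgroups, since restricting the action to a closed subgroup preserves amenability (Zimmer). It would then be enough to know amenability for $G=\operatorname{Isom}(\widetilde{S})$, or for the closure of $\Gamma$ in it.

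The main point to verify is that the hypotheses of \cite[Theorem 3.1]{spatzier} hold here. In particular, $\Gamma$ need not be a lattice, since $S$ may have infinite type, so the cited result must not assume cocompactness or finite covolume. I expect Spatzier's argument to avoid such assumptions. It builds an asymptotically invariant system of probability measures on $\partial\widetilde{S}$ using the uniform exponential contraction of geodesic rays, which comes from the pinching, and this uses only the geometry of $\widetilde{S}$, not any finiteness of $\Gamma\backslash\widetilde{S}$. If the stated hypotheses turned out to be too restrictive, I would reproduce that construction directly. For $x\in\widetilde{S}$ and $\xi\in\partial\widetilde{S}$, let $\lambda_x^\xi$ be the normalized measure on a shadow-like neighborhood determined by the ray from $x$ to $\xi$, at distance $n$. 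I would then show that $\|\lambda^{\gamma\xi}_{\gamma x}-\lambda^\xi_x\|\to 0$ as $n\to\infty$.
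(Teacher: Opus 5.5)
Your proposal follows the paper's argument. Regularity is the routine check that the countable group $\Gamma$ acts measurably on the standard Borel space $\partial\widetilde S$ and preserves the visual measure class (the paper only notes that $\Gamma$ acts by diffeomorphisms of $\partial\widetilde S$), and Zimmer amenability comes, exactly as in the paper, from applying \cite[Theorem 3.1]{spatzier} to the discrete subgroup $\Gamma<\operatorname{Isom}^+(\widetilde S)$.

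There are two minor slips, neither affecting this main line. First, Radon--Nikodym derivatives given by exponentials of Busemann functions hold for conformal (Patterson--Sullivan type) densities or in constant curvature, but not for visual angle measures in variable curvature, where only mutual absolute continuity is available and needed. Second, in your fallback sketch the quantity that must tend to $0$ is $\|\lambda^{\gamma\xi}_{\gamma x}-\lambda^{\gamma\xi}_{x}\|$, i.e.\ asymptotic independence of the basepoint, for probability measures on $\widetilde S$ averaged along the ray; it is not $\|\lambda^{\gamma\xi}_{\gamma x}-\lambda^{\xi}_{x}\|$.
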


	The fundamental r\^ole of amenable actions in the theory of bounded cohomology is described by Theorem~\ref{amenable_resolution} below, which allows us to compute
	the bounded cohomology of $\Gamma$ 
	via the cochain complex 
	\begin{equation}\label{resolution}
		0\rightarrow L^\infty_{\rm alt}(\partial\widetilde{S})^\Gamma\rightarrow  L^\infty_{\rm alt}((\partial\widetilde{S})^2)^\Gamma\rightarrow L^\infty_{\rm alt}((\partial\widetilde{S})^3)^\Gamma\rightarrow\cdots 
	\end{equation}
	of $\Gamma$-invariant alternating measurable bounded functions on $(\partial\widetilde{S})^{n+1}$ up to equality almost everywhere, endowed with the usual differential
	
	\[(\delta \varphi)(x_0,\dots,x_n) = \sum_{i=0}^n(-1)^i \varphi(\dots,\widehat{x_i},\dots)\ .\]
	Recall that a cochain $\varphi\colon (\partial\widetilde{S})^{n+1}\to\mathbb{R}$ is \emph{alternating} if $$\varphi(x_{\tau(0)},\dots,x_{\tau(n)})=\varepsilon(\sigma)\varphi(x_0,\dots,x_n)$$ for every
	permutation $\tau$ of $\{0,\dots,n\}$, where $\varepsilon(\tau)$ is the sign of $\tau$.
	
	Since $\partial\widetilde{S}$ is a regular amenable $\Gamma$-space, \cite[Theorem 2]{burger2001continuous} (or~\cite[Theorem 7.5.3]{Monod})
	implies the following:
	
	\begin{thm}\label{amenable_resolution}
		The bounded cohomology of $\Gamma$ is isometrically isomorphic to the cohomology of the complex
		\begin{equation}\label{resolution2}
			0\rightarrow L^\infty_{\rm alt}(\partial \widetilde{S})^\Gamma\rightarrow  L^\infty_{\rm alt}((\partial \widetilde{S})^2)^\Gamma\rightarrow L^\infty_{\rm alt}((\partial \widetilde{S})^3)^\Gamma\rightarrow\cdots 
		\end{equation}
	\end{thm}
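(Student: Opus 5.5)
This statement follows from the general theory of amenable resolutions. The plan is to check the hypotheses of the Burger--Monod theorem (\cite[Theorem 2]{burger2001continuous}, or \cite[Theorem 7.5.3]{Monod}) for the regular $\Gamma$-space $(\partial\widetilde{S},\mu)$, and then restrict from all cochains to alternating ones.

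First, regularity and amenability of $(\partial\widetilde{S},\mu)$ are exactly the content of Lemma~\ref{amenability} and the preceding remark. Since $\Gamma$ is discrete, it is in particular a locally compact second countable group, so the hypotheses of the Burger--Monod theorem hold. That theorem states that for an amenable regular $G$-space $B$, the complex $L^\infty(B^{\bullet+1})^G$ computes $H^\bullet_b(G)$ isometrically. Their proof goes in two steps. One shows that $L^\infty(B^{\bullet+1})$ is a strong resolution of the trivial module by relatively injective modules; amenability of $B$ gives relative injectivity via the existence of a conditional expectation $L^\infty(G\times B)\to L^\infty(B)$. The comparison with the standard resolution is then realised by a norm non-increasing chain map one way, and by the map induced by a measurable map to $G$ the other way, which is also norm non-increasing.

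Second, I pass to alternating cochains using the standard alternation operator
\[
\mathrm{Alt}(\varphi)(x_0,\dots,x_n)=\frac{1}{(n+1)!}\sum_{\tau}\varepsilon(\tau)\,\varphi(x_{\tau(0)},\dots,x_{\tau(n)}).
\]
This operator is a $\Gamma$-equivariant chain map of norm at most $1$, it is a projection onto the alternating subcomplex, and it is chain homotopic to the identity on any strong resolution. Hence the inclusion of the alternating subcomplex induces an isomorphism in cohomology, and it is isometric: the inclusion cannot increase seminorms, and $\mathrm{Alt}$ cannot increase them either. This is precisely the statement recorded in \cite[Theorem 7.5.3]{Monod} and \cite{burger2001continuous}.

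The only delicate point is the isometry, as opposed to mere isomorphism. It is the place where amenability is genuinely used: without it, $L^\infty$ cochains on $\partial\widetilde{S}$ would only give a resolution that is not relatively injective. Since that is handled entirely by the cited theorem once Lemma~\ref{amenability} is available, the proof reduces to citing it.
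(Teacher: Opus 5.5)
Your proposal is correct and follows the paper's route: the paper just notes that $(\partial\widetilde{S},\mu)$ is a regular amenable $\Gamma$-space (Lemma~\ref{amenability}) and invokes \cite[Theorem 2]{burger2001continuous} or \cite[Theorem 7.5.3]{Monod}, which already cover the alternating, isometric version. One slip in your optional sketch: the norm non-increasing map from the standard resolution into $L^\infty(B^{\bullet+1})$ comes from the equivariant conditional expectation (an equivariant measurable family of means on $\Gamma$), not from a measurable map to $\Gamma$, which typically does not exist equivariantly (also, amenability is needed already for the isomorphism, not only the isometry); since you rely on the cited theorem, this does not affect your argument.
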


	The elements of $L^\infty_{\rm alt}\left((\partial\widetilde{S})^{n+1} \right)$ are not functions, but \emph{classes} of functions.
	This being said, as it is customary we will usually denote the class of an element in $ L^\infty_{\rm alt}\left((\partial\widetilde{S})^{n+1} \right)^\Gamma$ simply by one of its representatives, thus writing  $ c\in L^\infty_{\rm alt}\left((\partial\widetilde{S})^{n+1} \right)^\Gamma$
	also when $c$ is a function.
	Notice that such a $c$ is not necessarily $\Gamma$-invariant as a function, even if its class is, and if $c$ is a cocycle, then the cocycle condition $\delta c = 0$ only holds almost everywhere in general. For later purposes, we need to single out the cases in which we can work with functions rather than with classes of functions. To this aim, we give the following:

	\begin{defn}
		We denote by $\mathcal{L}^\infty_{\text{alt}}((\partial\widetilde{S})^{\bullet+1})^\Gamma$  
		the set of bounded measurable alternating functions 
		$(\partial\widetilde{S})^{\bullet+1}\to\mathbb{R}$ which are genuinely (i.e.~not only almost everywhere) $\Gamma$-invariant. 
		
		A \emph{strict} cocycle is an element $c\in \mathcal{L}^\infty_{\text{alt}}((\partial\widetilde{S})^{\bullet+1})^\Gamma$ such that
		the identity $\delta c=0$ holds everywhere on $(\partial\widetilde{S})^{\bullet+2}$ (in this case we write $c\in Z\mathcal{L}^\infty_{\text{alt}}((\partial\widetilde{S})^{\bullet+1})^\Gamma$). 
		Any strict cocycle $c\in Z\mathcal{L}^\infty_{\text{alt}}((\partial\widetilde{S})^{n+1})^\Gamma$ defines a
		cocycle in $L^\infty_{\rm alt}((\partial\widetilde{S})^{n+1})^\Gamma$, hence, by Theorem~\ref{amenable_resolution}, it represents an element of $H_b^n(\Gamma)$.
	\end{defn}
	
	Let us now focus on the situation we are interested in. Recall that
	$\calT$ denotes the set of elements $(\xi_0,\xi_1,\xi_2)\in (\partial\widetilde{S})^3$ such that $\xi_i\neq \xi_j$ for $i\neq j$. 
	We consider the 
	map $\widehat{\omega}_g \colon (\partial\widetilde{S})^3\to \mathbb{R}$
	given by
	$$
	\widehat{\omega}_g (\xi_0,\xi_1,\xi_2)=\left\{\begin{array}{ll} \int_{T(\xi_0,\xi_1,\xi_2)} dA_{\widetilde{g}} \qquad &\text{if}\ (\xi_0,\xi_1,\xi_2)\in\mathcal{T}\ ,\\
		0 \qquad & \text{otherwise}\end{array}\right.
	$$
	(hence
	$\widehat{\omega}_g(\gamma_0,\gamma_1,\gamma_2)$ is the signed area of the ideal triangle $T(\gamma_0 {\xi}, \gamma_1 {\xi}, \gamma_2 {\xi})$,
	where the sign depends on the cyclic ordering of $\gamma_0 {\xi}, \gamma_1 {\xi}, \gamma_2 {\xi}$ on $\partial \widetilde{S}$).

	\begin{prop}\label{properties:c}
		We have $\widehat{\omega}_g\in Z\mathcal{L}^\infty_{\text{alt}}((\partial\widetilde{S})^{3})^\Gamma$, i.e.~$\widehat{\omega}_g$ is a strict cocycle. Moreover, its restriction to $\mathcal{T}$ is continuous.
	\end{prop}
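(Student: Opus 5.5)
We have to check five things about $\widehat{\omega}_g$: it is well defined and bounded, measurable and alternating, genuinely $\Gamma$-invariant, a cocycle at every point of $(\partial\widetilde{S})^4$, and continuous on $\mathcal{T}$. We take them in roughly that order.

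\textbf{Well-definedness and boundedness.} For $(\xi_0,\xi_1,\xi_2)\in\mathcal{T}$, pinched negative curvature gives a unique bi-infinite geodesic between any two distinct boundary points. So the ideal triangle $T(\xi_0,\xi_1,\xi_2)$ is well defined as the closed region bounded by the three geodesics. The integral $\int_{T(\xi_0,\xi_1,\xi_2)}dA_{\widetilde g}$ then equals $\pm\mathrm{Area}(T)$, with the sign given by whether the cyclic order of the vertices on $\partial\widetilde{S}$ agrees with the orientation. For finiteness we apply Gauss--Bonnet to the compact geodesic triangles $T_n$ with vertices $x_i^n\to\xi_i$ along the sides. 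If $K\le -a^2<0$, then
\[
a^2\,\mathrm{Area}(T_n)\le \int_{T_n}(-K)\,dA = \pi-\sum\alpha_i\le\pi .
\]
Monotone convergence gives $\mathrm{Area}(T)\le \pi/a^2$, and hence $\|\widehat{\omega}_g\|_\infty\le\pi/a^2$. The function is alternating because a transposition of the vertices reverses the orientation of the parametrization, and it is $0$ on degenerate triples, consistent with the alternating condition. Genuine $\Gamma$-invariance holds because $\Gamma$ acts by orientation-preserving isometries. Such an isometry maps geodesics to geodesics and ideal triangles to ideal triangles, preserves $dA_{\widetilde g}$, and preserves the cyclic order on $\partial\widetilde{S}$.

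\textbf{Strict cocycle identity.} Fix $(\xi_0,\dots,\xi_3)$. If all four points are distinct, they span an ideal quadrilateral. Its two diagonal decompositions give the additivity $\widehat{\omega}_g(\xi_1,\xi_2,\xi_3)-\widehat{\omega}_g(\xi_0,\xi_2,\xi_3)+\widehat{\omega}_g(\xi_0,\xi_1,\xi_3)-\widehat{\omega}_g(\xi_0,\xi_1,\xi_2)=0$, as signed integrals, since the geodesic sides cancel pairwise. The cleanest way to make this rigorous is the following: view $\widehat{\omega}_g(\xi_0,\xi_1,\xi_2)$ as $\int dA$ over the oriented 2-chain, and note that the signed counting function of $\sum_i(-1)^iT(\dots\widehat{\xi_i}\dots)$ is a.e.\ zero, because it is locally constant off the six geodesics and vanishes near infinity. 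If exactly two points coincide, say $\xi_j=\xi_k$, the two nonzero terms correspond to the same ideal triangle with vertex orders differing by a transposition and opposite signs, so they cancel. The remaining degenerate cases are checked the same way, or are trivially zero.

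\textbf{Continuity on $\mathcal{T}$, the main obstacle.} This also yields measurability, since $\widehat{\omega}_g$ is continuous on the open set $\mathcal{T}$ and $0$ on its complement. The approach is to show that if $\xi_i^n\to\xi_i$ with $(\xi_i)\in\mathcal{T}$, then $\mathrm{Area}(T^n)\to\mathrm{Area}(T)$; the sign is locally constant on $\mathcal{T}$. First, geodesics between boundary points depend continuously on their endpoints, uniformly on compact sets, in pinched negative curvature. Hence on any compact $K\subset\widetilde{S}$ the indicator of $T^n$ converges to that of $T$ a.e., and the area inside $K$ converges. Second, we need uniform smallness of the area outside a large compact set. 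Near each ideal vertex, a triangle is contained in a thin cusp region between two asymptotic geodesics. We would bound its area by Gauss--Bonnet applied to truncated triangles: the area beyond a horocycle at distance $R$ is at most $C e^{-aR}$ by comparison, with constants uniform for vertices in small neighbourhoods of the $\xi_i$. An alternative is to write $\mathrm{Area}=\lim$ of compact triangles using Gauss--Bonnet, $a^2\mathrm{Area}\le\pi-\sum\alpha_i$, together with lower curvature bounds to control the tails uniformly. Combining the two steps gives continuity.
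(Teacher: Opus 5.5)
Your proposal is correct and follows the same route as the paper: the Gauss--Bonnet bound $\pi/a^2$ gives boundedness, $\Gamma$ acts by orientation-preserving isometries, the cocycle identity comes from additivity of signed area (Stokes), and continuity of the area in the vertices on $\mathcal{T}$ gives measurability. The paper simply asserts that area depends continuously on the vertices, so your convergence-on-compacts plus exponential-tail sketch adds detail the paper omits. One small point: for $\mathrm{Area}(T)\le \pi/a^2$ use Fatou's lemma rather than monotone convergence, since the approximating compact triangles need not be nested.
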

	\begin{proof}
		Since the negatively curved metric $g$ is pinched, there exists $a>0$ such that 
		$k\leq -a^2$ at every point of $S$ (hence, of $\widetilde{S}$), where $k$ denotes the curvature. Therefore,
		the area of any ideal triangle of $\widetilde{S}$ is not bigger than $\pi/a^2$, hence 
		$\|\widehat{\omega}_g\|_\infty\leq \pi/a^2$, and  $\widehat{\omega}_g$ is well defined (as a set-theoretic function) and bounded; moreover,
		since elements of $\Gamma$ are orientation-preserving isometries of $\widetilde{S}$, the cochain $\widehat{\omega}_g$ is $\Gamma$-invariant, and it is alternating by construction.  The cocycle identity
		$\delta \widehat{\omega}_g=0$  follows from Stokes' Theorem, hence
		we are left to show that  $\widehat{\omega}_g$ is continuous on $\mathcal{T}$ (which implies in turn that $\widehat{\omega}_g$ is  measurable
		on $(\partial\widetilde{S})^3$). However, the fact that $\widehat{\omega}_g$ is continuous on $\mathcal{T}$  readily follows from the fact that the area of ideal triangles with distinct vertices continuously depends on the vertices. 
\end{proof}

\begin{prop}\label{rep:boundary}
	The strict cocycle $\widehat{\omega}_g$ represents the class $\theta([\omega_g])\in H^2_b(\Gamma)$.
\end{prop}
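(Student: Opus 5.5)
We want to show that the class of $\widehat{\omega}_g$ in the cohomology of the complex \eqref{resolution2} corresponds, under the isomorphism of Theorem~\ref{amenable_resolution}, to the class of $\omega_{g,\xi}$. By Lemma~\ref{boundaryrep:lemma}, $\omega_{g,\xi}$ represents $\theta([\omega_g])$, so this suffices.

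The plan is to make that isomorphism explicit in degree $2$ through a pair of comparison maps. The standard theory (\cite{burger2001continuous}, \cite[Chapter 7]{Monod}) says the following. Whenever we have a $\Gamma$-equivariant chain map from the complex \eqref{resolution2} to the standard complex $C^\bullet_b(\Gamma)^\Gamma$ that extends the identity on $\mathbb{R}$, it induces the canonical isomorphism in cohomology. This is because $\ell^\infty(\Gamma^{\bullet+1})$ is a relatively injective strong resolution and \eqref{resolution2} is a strong resolution, so any two extensions of the identity are $\Gamma$-homotopic.

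The natural candidate for this map is evaluation at an orbit. Fix $\xi\in\partial\widetilde{S}$ and set
$$
(\xi^*c)(\gamma_0,\dots,\gamma_n)=c(\gamma_0\xi,\dots,\gamma_n\xi).
$$
It is clearly a $\Gamma$-equivariant chain map extending the identity on constants, and it satisfies $\xi^*\widehat{\omega}_g=\omega_{g,\xi}$ on the nose. The catch is that $\xi^*$ is \emph{not} well defined on $L^\infty$-classes: the orbit $\Gamma\xi$ has measure zero, so it does not give a map on \eqref{resolution2}.

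This is exactly where strictness helps, and I expect handling it to be the main obstacle. I would proceed in three steps.

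\emph{Step 1.} Choose a measurable probability density $\nu$, e.g.\ a probability measure on $\partial\widetilde{S}$ in the class $\mu$. Define the honest comparison map
$$
\Phi(c)(\gamma_0,\dots,\gamma_n)=\int_{\partial\widetilde{S}} c(\gamma_0 x,\dots,\gamma_n x)\,d\nu(x).
$$
Each $\gamma_i\nu$ lies in the class $\mu$, so this is well defined on $L^\infty$-classes. It is $\Gamma$-equivariant, commutes with $\delta$, is norm non-increasing, and is the identity in degree $0$. Hence it realizes the isomorphism of Theorem~\ref{amenable_resolution}, and $\Phi(\widehat{\omega}_g)$ represents the image of $[\widehat{\omega}_g]$ in $H^2_b(\Gamma)$.

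\emph{Step 2.} Compare $\Phi(\widehat{\omega}_g)$ with $\omega_{g,\xi}$. For each $x$, $x^*\widehat{\omega}_g=\omega_{g,x}$ is a genuine bounded cocycle on $\Gamma$, because $\widehat{\omega}_g$ is a strict cocycle (Proposition~\ref{properties:c}). Moreover $\omega_{g,x}-\omega_{g,\xi}=\delta b_{x}$ with an explicit bounded primitive
$$
b_{x}(\gamma_0,\gamma_1)=\widehat{\omega}_g(\gamma_0 x,\gamma_1 x,\gamma_1\xi)-\widehat{\omega}_g(\gamma_0 x,\gamma_0\xi,\gamma_1\xi),
$$
up to the appropriate signs. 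This is the usual prism decomposition of the difference of two orbit-evaluations. It is checked using only the everywhere-valid identity $\delta\widehat{\omega}_g=0$ and $\Gamma$-invariance of $\widehat{\omega}_g$. Since $\|b_x\|_\infty\le 2\|\widehat{\omega}_g\|_\infty$ and $b_x$ depends measurably on $x$, we can integrate:
$$
\Phi(\widehat{\omega}_g)-\omega_{g,\xi}=\delta\Big(\int b_x\,d\nu(x)\Big).
$$
The measurability comes from the measurability of $\widehat{\omega}_g$, which follows from its continuity on $\mathcal T$ together with the fact that the complement of $\mathcal T$ is closed.

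\emph{Step 3.} Conclude. The class of $\widehat{\omega}_g$ maps to $[\omega_{g,\xi}]$, which equals $\theta([\omega_g])$ by Lemma~\ref{boundaryrep:lemma}.

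The delicate points to verify are that the prism primitive $b_x$ really satisfies the coboundary identity everywhere, including degenerate configurations where $\widehat{\omega}_g$ vanishes, and that Fubini applies to justify exchanging $\delta$ with the integral. Both rest on strictness and boundedness.
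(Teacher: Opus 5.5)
Your overall strategy (realize the isomorphism of Theorem~\ref{amenable_resolution} by an explicit comparison map, then use strictness to pass to the orbit evaluation at $\xi$) is sound. It is essentially what \cite[Corollary 2.3]{BurgerIozzi} packages; the paper simply invokes that result with $Z=\Gamma\cdot\xi$ and concludes via Lemma~\ref{boundaryrep:lemma}, as you do.

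But Step~1 is wrong: your $\Phi$ is \emph{not} well defined on $L^\infty$-classes. For fixed $(\gamma_0,\dots,\gamma_n)$, the value $\Phi(c)(\gamma_0,\dots,\gamma_n)$ is the integral of $c$ against the push-forward of $\nu$ under $x\mapsto(\gamma_0x,\dots,\gamma_nx)$. That measure is concentrated on a graph which is $\mu^{\otimes(n+1)}$-null for $n\geq 1$. The fact that each marginal $\gamma_i\nu$ lies in the class of $\mu$ is irrelevant.

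Concretely, let $R\subseteq(\partial\widetilde S)^3$ be the set of triples contained in a single $\Gamma$-orbit. It is Borel, $\Gamma$-invariant, permutation-invariant and null, so $\ori\cdot\mathbf{1}_R=0$ in $L^\infty_{\rm alt}((\partial\widetilde S)^3)^\Gamma$. Yet $\Phi(\ori\cdot\mathbf{1}_R)=\Phi(\ori)$. For instance, $\Phi(\ori)(1,\gamma,\gamma^{-1})$ is the difference of the $\nu$-measures of the two arcs bounded by the fixed points of a hyperbolic $\gamma$, which is non-zero in general.

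So averaging over $x$ does not remove the measure-zero obstruction you correctly diagnosed for $\xi^*$. $\Phi$ is not a morphism of the complex \eqref{resolution2}, and nothing tells you that $\Phi(\widehat\omega_g)$ represents the image of the class of $\widehat\omega_g$. Your Step~2 is fine as far as it goes, but it only compares $\omega_{g,\xi}$ with $\Phi(\widehat\omega_g)$, and neither is yet linked to that class.

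The repair is to integrate the variables independently:
\[
\Phi(c)(\gamma_0,\dots,\gamma_n)=\int c(\gamma_0x_0,\dots,\gamma_nx_n)\,d\nu(x_0)\cdots d\nu(x_n).
\]
Now $(x_i)\mapsto(\gamma_ix_i)$ sends $\nu^{\otimes(n+1)}$ into the class of $\mu^{\otimes(n+1)}$. So $\Phi$ descends to \eqref{resolution2}, and, being a $\Gamma$-morphism of resolutions extending the identity, it realizes the isomorphism.

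Step~2 must then be modified, because for fixed $(x_0,\dots,x_n)$ the function $(\gamma_i)\mapsto\widehat\omega_g(\gamma_0x_0,\dots,\gamma_nx_n)$ is not a cocycle on $\Gamma$. The cleanest way to use strictness is as follows. The complex $\mathcal{B}^\infty((\partial\widetilde S)^{\bullet+1})$ of everywhere-defined bounded Borel functions is a strong resolution of $\mathbb{R}$, with contracting homotopy $c\mapsto c(\xi,\cdot)$. Both $c\mapsto\Phi(c)$ and $c\mapsto\xi^*c$ are $\Gamma$-morphisms from it to the relatively injective resolution $\ell^\infty(\Gamma^{\bullet+1})$ extending the identity, hence they are $\Gamma$-homotopic. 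Applied to the genuinely invariant, everywhere-closed cocycle $\widehat\omega_g$, this gives $[\Phi(\widehat\omega_g)]=[\omega_{g,\xi}]$.

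An explicit version: attach an independent point $x_j$ to each vertex $\gamma_j$, and write the prism between $\gamma_j\mapsto\gamma_jx_j$ and $\gamma_j\mapsto\gamma_j\xi$. This is valid pointwise because $\delta\widehat\omega_g=0$ everywhere; then integrate over all the $x_j$.

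With this change your argument becomes a self-contained proof of the Burger--Iozzi formula that the paper cites.
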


\begin{proof}
	Thanks to the fact that $\widehat{\omega}_g$ is a strict cocycle, i.e.~that it is defined everywhere and is such that
	$\delta \widehat{\omega}_g=0$ everywhere, we may apply~\cite[Corollary 2.3]{BurgerIozzi} to the case when  $G=\Gamma$,  $X=\partial\widetilde{S}$, and $Z=\Gamma\cdot \xi\subseteq X$, where $\xi\in\partial\widetilde{S}$ is a fixed basepoint, thus obtaining
	that the cochain $\widehat{\omega}_g$ represents the class $[\omega_{g,\xi}]\in H^2_b(\Gamma)$, where $\omega_{g,\xi}\in C^2_b(\Gamma)^\Gamma$ is the cocycle described in Subsection~\ref{group:sub}.
	The conclusion then follows from Lemma~\ref{boundaryrep:lemma}.
\end{proof}

\subsection{The dynamics of $\Gamma$ on $\partial \widetilde{S}$}\label{limitset}
Let $S=\widetilde{S}/\Gamma$ be as above, so that $\Gamma$ is a discrete torsion-free subgroup of $\isom^+(\widetilde{S})$. 
We will need to exploit some properties of the action of $\Gamma$ on $\partial \widetilde{S}$, which we will now briefly describe.
We first recall the definition of 
 \emph{limit set}
$\Lambda(\Gamma)$ of $\Gamma$: if $x_0\in\widetilde{S}$ is a fixed basepoint, then
$$
\Lambda(\Gamma)=\{p\in \partial \widetilde{S}\, |\, p\in \partial \widetilde{S}\cap \overline{\Gamma\cdot x_0}\}\ ,
$$
where $\overline{\Gamma\cdot x_0}$ denotes the closure  in $\widetilde{S}\cup \partial \widetilde{S}$ of the orbit of $x_0$. It is well known that this definition does not depend on the choice of $x_0$. Moreover, one says that $\Gamma$ is \emph{elementary} if $\Lambda(\Gamma)$ is finite. It is well known that for an elementary torsion-free discrete group $\Gamma<\isom^+(\widetilde{S})$ one of the following conditions must hold:
\begin{enumerate}
\item $\Gamma=\{1\}$ (and $\Lambda(\Gamma)=\emptyset$); in this case, $S=\widetilde{S}$ is diffeomorphic to $\mathbb{R}^2$;
\item $\Gamma$ is infinite cyclic generated by a parabolic element (and $\Lambda(\Gamma)$ consists of one point); in this case, $S=\widetilde{S}/\Gamma$ is diffeomorphic to a cylinder; 
\item $\Gamma$ is infinite cyclic generated by a hyperbolic element (and $\Lambda(\Gamma)$ consists of two points); in this case, $S=\widetilde{S}/\Gamma$ is again diffeomorphic to a cylinder.
\end{enumerate}

If $g\in\Gamma$ is a hyperbolic isometry, we denote by $g^+\in\partial\widetilde{S}$ (resp.~$g^-\in\partial\widetilde{S}$) the attractive (resp.~repulsive) fixed point of $g$.
We then denote by $H(\Gamma)\subseteq \partial\widetilde{S}$ the set of fixed points of hyperbolic isometries of $\Gamma$, and
we set 
$$A(\Gamma)=\{(p,q)\in(\partial \widetilde{S})^2\, |\, (p,q)=(g^-,g^+)\ \text{for some hyperbolic}\ g\in\Gamma\}$$
(here ``$A$'' stands for ``axes'',  since pairs in $A(\Gamma)$  are exactly the pairs of endpoints of the axes of  hyperbolic isometries in $\Gamma$). 
We observe that
 $$ A(\Gamma)\subseteq (H(\Gamma)\times H(\Gamma))\setminus \Delta\subseteq  (\Lambda(\Gamma)\times \Lambda(\Gamma))\setminus \Delta\ ,$$
 where $\Delta$ is the diagonal.

The following result is proved in~\cite[Theorem 1.1]{hamenstadt}:
\begin{lemma}\label{dense}
If $\Gamma$ is non-elementary, the subset $A(\Gamma)$ is dense in $(\Lambda(\Gamma)\times \Lambda(\Gamma))\setminus \Delta$.
\end{lemma}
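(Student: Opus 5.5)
We argue by approximation: first show that every pair of distinct limit points is a limit of pairs of distinct points of $H(\Gamma)$, and then approximate such a pair by an ordered pair $(h^-,h^+)$ of fixed points of a single hyperbolic element, produced by a ping-pong construction. Since $\Gamma$ is non-elementary, $\Lambda(\Gamma)$ is a closed, infinite, $\Gamma$-invariant subset of $\partial\widetilde{S}$. It is the minimal nonempty closed invariant set: every orbit closure of a point of $\Lambda(\Gamma)$ is all of $\Lambda(\Gamma)$. Moreover, $H(\Gamma)$ is dense in $\Lambda(\Gamma)$.

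For density, we use the convergence property of the action of $\Gamma$ on $\widetilde{S}\cup\partial\widetilde{S}$, which holds because $\widetilde{S}$ is Gromov hyperbolic under pinched negative curvature. Take any sequence $\gamma_n\to\infty$ in $\Gamma$ with $\gamma_n x_0\to p$, where $p\in\Lambda(\Gamma)$. After passing to a subsequence, $\gamma_n^{\pm1}x_0$ converge to points $p$ and $q$, and $\gamma_n$ converges to $p$ uniformly on compact sets away from $q$. If $p\neq q$, a standard north–south argument shows that $\gamma_n$ is hyperbolic for $n$ large, with $\gamma_n^+\to p$ and $\gamma_n^-\to q$. If $p=q$, we compose with a fixed hyperbolic element $h$, which exists since $\Gamma$ is non-elementary, to separate the attracting and repelling points. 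Concretely, we replace $\gamma_n$ by $\gamma_n h^{k}$ with $h^{\pm}\neq q$ and $k$ large, so that $\gamma_n h^k$ still pushes toward $p$ while its repelling point moves near $h^-\neq p$.

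Now fix $(p,q)\in\Lambda(\Gamma)^2\setminus\Delta$ and neighbourhoods $U\ni p$ and $V\ni q$ with disjoint closures. By the previous step, choose hyperbolic elements $a,b\in\Gamma$ with $a^+\in U$ and $b^-\in V$, and with all four fixed points $a^{\pm},b^{\pm}$ distinct. Distinctness can be arranged because two hyperbolic elements of a discrete torsion-free group sharing one fixed point share both, and because, if necessary, we can conjugate one of them by a suitable element. Set $h=a^Nb^{N}$ for $N$ large. By north–south dynamics, $b^N$ maps the complement of a small neighbourhood of $b^-$ into a small neighbourhood of $b^+$. Then $a^N$ maps that neighbourhood of $b^+$ into a small neighbourhood $U'\subset U$ of $a^+$, using $b^+\neq a^-$. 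Hence $h$ maps the complement of a neighbourhood of $b^-$ into $U'$, and similarly $h^{-1}=b^{-N}a^{-N}$ maps the complement of a neighbourhood of $a^+$ into a small neighbourhood $V'\subset V$ of $b^-$. The ping-pong lemma then yields that $h$ is hyperbolic with $h^+\in U'$ and $h^-\in V'$. Therefore $(h^-,h^+)\in A(\Gamma)\cap(V\times U)$.

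Relabeling which of $p$ and $q$ plays the attracting role covers both orderings, so $A(\Gamma)$ is dense in $(\Lambda(\Gamma)\times \Lambda(\Gamma))\setminus \Delta$.

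The main obstacle is justifying the north–south dynamics and the convergence-group property in variable pinched negative curvature without constant-curvature tools. Here I would rely on $\widetilde{S}$ being CAT$(-a^2)$, hence Gromov hyperbolic with boundary $\partial\widetilde{S}\cong S^1$. The needed dynamical facts are then the standard ones for isometries of proper hyperbolic spaces. No compactness of $S$ is used, so the argument applies to surfaces of infinite type as well.
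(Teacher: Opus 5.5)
Your argument is correct. It differs from the paper in that the paper gives no proof of Lemma~\ref{dense}: it imports the statement as Theorem~1.1 of Hamenst\"adt's paper.

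What you supply instead is the classical self-contained argument for non-elementary convergence groups. The convergence dichotomy, together with composing with a fixed hyperbolic element when $p=q$, shows that attracting fixed points of hyperbolic elements (and, via inverses, repelling ones) are dense in $\Lambda(\Gamma)$. The north--south dynamics of $h=a^Nb^N$ then produces an axis with endpoints in $V\times U$. This uses only discreteness, the absence of elliptic elements, and the convergence property of the action on $\widetilde{S}\cup\partial\widetilde{S}$, which holds since $\widetilde{S}$ is CAT$(-a^2)$. So it needs no finiteness assumption on $S$, and it works for discrete groups acting on any proper Gromov hyperbolic space. The paper's citation buys brevity; your route makes the lemma independent of the external reference.

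Three points can be streamlined or made explicit.
\begin{enumerate}
\item You need neither four distinct fixed points nor any conjugation. The ping-pong only uses $a^+\neq b^-$ and $a^-\neq b^+$. The first is automatic from $\overline{U}\cap\overline{V}=\emptyset$. The second follows from the fact you quote: if $a^-=b^+$, then $a$ and $b$ share both fixed points, which forces $a^+=b^-$.
\item The final step deserves its one-line justification. We have $h\neq 1$, and a torsion-free discrete group has no elliptic elements. For every $x\notin U'\cup V'$ and $n\geq 1$ you have $h^n x\in U'$ and $h^{-n}x\in V'$. Hence a parabolic $h$ would have its unique fixed point in $\overline{U'}\cap\overline{V'}=\emptyset$, which is impossible.
\item In the case $p=q$, taking $k$ large is unnecessary: any $k\neq 0$ works, because $h^{-k}q\neq q$.
\end{enumerate}
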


We will need also the following:

\begin{lemma}\label{interval}
Let $\Gamma$ be non-elementary, let $I$ be a connected component of $\partial \widetilde{S}\setminus \Lambda(\Gamma)$, and let $\{x_1,x_2\}= \overline{I}\setminus I$ be the endpoints of 
$I$. Suppose that there exists an element $g\in \Gamma\setminus \{1\}$ such that $g(x_1)=x_1$. Then $g(x_2)=x_2$, and
$(x_1,x_2)\in A(\Gamma)$.
\end{lemma}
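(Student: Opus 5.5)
Since $\Gamma$ is a discrete torsion-free group of isometries, the non-trivial element $g$ is either parabolic or hyperbolic. We first show that $g$ must fix $x_2$ as well, then rule out the parabolic case, and finally identify $(x_1,x_2)$ with the ordered pair of endpoints of an axis.

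\textbf{Step 1: $g$ permutes the complementary intervals.} The limit set $\Lambda(\Gamma)$ is $\Gamma$-invariant and closed, so $g$ permutes the connected components of $\partial\widetilde{S}\setminus\Lambda(\Gamma)$. Moreover $g$ acts on the circle $\partial\widetilde{S}$ as an orientation-preserving homeomorphism. Since $g(x_1)=x_1$, the interval $g(I)$ is a complementary component having $x_1$ as an endpoint, and it lies on the same side of $x_1$ as $I$, because orientation is preserved. The interval $I$ is the unique complementary component with endpoint $x_1$ on that side. Hence $g(I)=I$, so $g$ fixes $\overline{I}\setminus I=\{x_1,x_2\}$ setwise. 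Since $g$ fixes $x_1$, it also fixes $x_2$.

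\textbf{Step 2: $g$ is hyperbolic.} A parabolic isometry has exactly one fixed point on $\partial\widetilde{S}$. Because $x_1\neq x_2$ are both fixed, $g$ cannot be parabolic, so it is hyperbolic. Its fixed points are exactly $g^+$ and $g^-$, so $\{x_1,x_2\}=\{g^-,g^+\}$.

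\textbf{Step 3: ordering.} If $(x_1,x_2)=(g^-,g^+)$, the pair lies in $A(\Gamma)$ by definition. Otherwise $(x_1,x_2)=(g^+,g^-)$, and we replace $g$ by the hyperbolic element $g^{-1}$, whose attractive and repulsive points are swapped. Then $(x_1,x_2)=((g^{-1})^-,(g^{-1})^+)\in A(\Gamma)$.

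The only delicate point is the orientation and uniqueness claim in Step 1. It uses that $x_1$ is an endpoint of $I$ and that two distinct complementary components are disjoint open arcs. Non-elementarity ensures that $\Lambda(\Gamma)$ is infinite, so $I$ is a genuine arc with two distinct endpoints. Because $g$ is orientation-preserving and fixes $x_1$, it maps arcs starting at $x_1$ in the positive direction to arcs of the same kind. Two distinct complementary components cannot both begin at $x_1$ in the same direction, since they would then overlap near $x_1$.
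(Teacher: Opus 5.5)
Your proposal is correct and follows the same route as the paper. You use that $g$ preserves $\Lambda(\Gamma)$ and the orientation of $\partial\widetilde{S}$ to get $g(I)=I$, hence $g(x_2)=x_2$. Two distinct fixed points then force $g$ to be hyperbolic. Your Step 3 (passing to $g^{-1}$ so that $(x_1,x_2)=(g^-,g^+)$ in the right order) makes explicit a detail that the paper leaves implicit.
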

\begin{proof}
Since $g$ acts as self-homeomorphism of  $\widetilde{S}$ leaving $\Lambda(\Gamma)$ invariant, 
and $g(x_1)=x_1$, the set $g(I)$ must be a component of $\widetilde{S}\setminus \Lambda(\Gamma)$ having 
$x_1$ as an endpoint. Using that $g$ preserves the orientation of $\partial \widetilde{S}$, we then have
$g(I)=I$, 
and $g(x_2)=x_2$. Since $g\neq 1$, this implies
that $g$ is hyperbolic, and $(x_1,x_2)\in A(\Gamma)$.
\end{proof}

\section{The bounded area class is non-trivial}\label{nontrivial:sec}
Let as above $S=\widetilde{S}/\Gamma$ be a surface endowed with a complete pinched negatively curved Riemannian structure $g$.
We are now ready to prove Theorem~\ref{main:surface:thm}, which states that the class $[\omega_g]\in H^2_b(S)$ is non-trivial,
provided that $S$ is not homeomorphic to a disc or to a cylinder. 
To this aim, we first exploit a result from~\cite{BurgerIozzi}, which allows to check whether  $[\omega_g]\in H^2_b(S)$ is trivial
just by looking at the restriction of $\widehat{\omega}_g$ on triples of points contained in the limit set $\Lambda(\Gamma)$:

 \begin{prop}\label{key:prop}
 The bounded cocycle $\widehat{\omega}_g$ represents the trivial class in $H^2_b(\Gamma)$ if and only if it vanishes identically
 on $\Lambda(\Gamma)^3\subseteq (\partial \widetilde{S})^3$.
 \end{prop}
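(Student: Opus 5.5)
Both directions go through the resolution of Theorem~\ref{amenable_resolution}, where in degree $2$ there are no coboundaries to worry about in one direction, and through the Burger--Iozzi machinery in the other.

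First I reduce to a statement about $L^\infty$ cocycles. Since alternating $\Gamma$-invariant $L^\infty$ functions on $(\partial\widetilde{S})^2$ give the degree-$1$ term of the complex~\eqref{resolution2}, the class of $\widehat{\omega}_g$ vanishes if and only if $\widehat{\omega}_g=\delta b$ almost everywhere for some $b\in L^\infty_{\rm alt}((\partial\widetilde{S})^2)^\Gamma$.

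For the "if" direction, suppose $\widehat{\omega}_g$ vanishes on $\Lambda(\Gamma)^3$. I would use the description of Proposition~\ref{rep:boundary}. By that proposition and~\cite[Corollary 2.3]{BurgerIozzi}, the class is represented by $\omega_{g,\xi}$ for any basepoint $\xi$. I then choose $\xi\in\Lambda(\Gamma)$, which is possible when $\Lambda(\Gamma)\neq\emptyset$; if $\Gamma=\{1\}$, then $H^2_b$ vanishes trivially. Since $\Lambda(\Gamma)$ is $\Gamma$-invariant, $\omega_{g,\xi}(\gamma_0,\gamma_1,\gamma_2)=\widehat{\omega}_g(\gamma_0\xi,\gamma_1\xi,\gamma_2\xi)=0$. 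So the class is represented by the zero cocycle and is trivial.

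For the "only if" direction, suppose the class is trivial. The cocycle $\widehat{\omega}_g$ is strict and alternating, and by Theorem~\ref{amenable_resolution} the isomorphism is isometric. In degree $2$ I would like to invoke the Burger--Iozzi result that the natural complex has no nonzero invariant coboundaries in degree $2$, i.e.\ $L^\infty_{\rm alt}((\partial\widetilde S)^2)^\Gamma$-coboundaries are "essentially" trivial on the support of the measure. The difficulty is that when $\Lambda(\Gamma)\neq\partial\widetilde S$ the measure $\mu$ may see the complement $\Omega=\partial\widetilde S\setminus\Lambda(\Gamma)$, where the action is wandering, so invariant $b$ can be nonzero. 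The cleaner route, which I would take, is to pass through the cocycle $\omega_{g,\xi}$ with $\xi\in\Lambda(\Gamma)$, which represents the trivial class, and then show that the function $F(x,y,z)=\widehat{\omega}_g(x,y,z)$ on triples of points of $\Lambda(\Gamma)$ vanishes.

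The key argument for this last step is the following. A bounded $\Gamma$-cochain $f(\gamma_0,\gamma_1,\gamma_2)=\omega_{g,\xi}$ that is a coboundary $\delta b$ with $b$ bounded forces, for a hyperbolic $h$ with fixed points $h^\pm$ and any $\gamma$, the growth estimate
$$\sum_{k=0}^{n-1}\omega_{g,\xi}(1,h^k\gamma,h^{k+1}\gamma)$$
to stay bounded by $3\|b\|$, using telescoping along the cycle $(1,h^k\gamma,h^{k+1}\gamma)$. I would choose $\xi=h^-$ or a nearby orbit point and use Lemma~\ref{dense} to approximate any triple in $\Lambda(\Gamma)^3$ by configurations $(x,\,h^k y,\,h^{k+1}y)$ whose areas are close to a fixed nonzero value, making the sum grow linearly. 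Concretely, if $\widehat{\omega}_g(x_0,x_1,x_2)\ne0$ for distinct limit points, then by continuity (Proposition~\ref{properties:c}) and Lemma~\ref{dense} I pick a hyperbolic $h$ with axis endpoints near $x_1,x_2$ and $x=\gamma\xi$ near $x_0$, such that the ideal triangles $(x,h^ky,h^{k+1}y)$ nest into the disjoint family of translates. This gives infinitely many triangles with areas bounded below by a uniform constant. The consecutive ideal triangles with a common vertex tile a region of infinite area, so the sum diverges linearly, which contradicts boundedness. Making this quantitative is the main obstacle: one must ensure that the areas of $(x,h^k y,h^{k+1}y)$ do not decay. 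I would handle it by choosing $x$ itself to be $h^{-}$-adjacent so that $h$ permutes these triangles isometrically up to the fixed vertex, using Lemma~\ref{interval} to control the case where limit points bound gaps.
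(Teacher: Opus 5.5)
Your ``if'' direction is correct: choosing $\xi\in\Lambda(\Gamma)$ in Lemma~\ref{boundaryrep:lemma}, the $\Gamma$-invariance of $\Lambda(\Gamma)$ makes $\omega_{g,\xi}$ vanish identically, and the case $\Gamma=\{1\}$ is trivial. The paper only spells out the other direction, via \cite[Proposition 3.1]{BurgerIozzi}, so this is a reasonable complement.

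The ``only if'' direction has a genuine gap, and it sits in your key estimate. If $\omega_{g,\xi}=\delta b$ with $b\in C^1_b(\Gamma)^\Gamma$, then
\[
\sum_{k=0}^{n-1}\omega_{g,\xi}(1,h^k\gamma,h^{k+1}\gamma)=\sum_{k=0}^{n-1}b(h^k\gamma,h^{k+1}\gamma)+b(1,\gamma)-b(1,h^n\gamma)=n\,b(1,\gamma^{-1}h\gamma)+b(1,\gamma)-b(1,h^n\gamma).
\]
Only the terms $b(1,\cdot)$ telescope. By $\Gamma$-invariance, every term $b(h^k\gamma,h^{k+1}\gamma)$ equals the same constant $b(1,\gamma^{-1}h\gamma)$. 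So the sum is not bounded by $3\|b\|_\infty$: linear growth with slope at most $\|b\|_\infty$ is exactly what coboundaries of bounded cochains allow, and the linear divergence you aim for gives no contradiction.

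The hyperbolic cylinder shows that this kind of growth argument cannot detect non-triviality at all. Take $\Gamma=\langle h\rangle$ with $h$ hyperbolic and $\xi\notin\{h^-,h^+\}$. The points $\xi,h\xi,h^2\xi,\dots$ move monotonically along one arc towards $h^+$. Hence every term with $k\geq 1$ is the signed area of a consistently oriented ideal triangle, namely $\pm\pi$, and the sum grows like $n\pi$. Yet $H^2_b(\Gamma)=0$. Your reasoning (``infinitely many consistently oriented triangles with area bounded below, hence divergence, hence non-triviality'') would therefore ``prove'' that the class of the cylinder is non-trivial.

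What is missing is a way to remove the degree-one ambiguity on the limit set. You correctly identified this obstruction when $\Lambda(\Gamma)\neq\partial\widetilde S$, but then bypassed it. The paper handles it by citing \cite[Proposition 3.1]{BurgerIozzi}. One standard way to realize it:
\begin{enumerate}
\item Replace $(\partial\widetilde S,\mu)$ by an amenable, doubly ergodic $\Gamma$-space whose measure is carried by $\Lambda(\Gamma)$ with full support there, e.g.\ the Poisson boundary of a suitable random walk on $\Gamma$, mapped equivariantly into $\Lambda(\Gamma)$.
\item Double ergodicity kills all invariant alternating $L^\infty$ functions on pairs, so triviality of the class forces $\widehat\omega_g=0$ almost everywhere on $\Lambda(\Gamma)^3$.
\item Continuity on $\mathcal{T}$ (Proposition~\ref{properties:c}) together with full support upgrades this to vanishing everywhere.
\end{enumerate}
Alternatively, since $\widehat\omega_g$ is nonzero at every point of $\mathcal{T}$, for this particular cocycle the ``only if'' direction just says that $[\omega_g]\neq 0$ when $\Gamma$ is non-elementary. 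The remark after the proof of Theorem~\ref{main:surface:thm} sketches an elementary route to that: pair the class against the relative fundamental class of a convex finite-area subsurface whose boundary subgroups are amenable.
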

\begin{proof}
Let us first introduce some  notation from~\cite{BurgerIozzi}.
Let $X$ be a proper CAT$(-1)$-space, $G_2$ a closed subgroup of $\isom(X)$, $E$ a separable 
coefficient $G_2$-module (i.e.~the topological dual of a Banach $G_2$-module), and
$c\colon \partial X^3\to E$ 
a Borel measurable, alternating, bounded, $G_2$-invariant strict cocycle which is continuous
on the subset $\mathcal{T}$ of pairwise distinct triples in $(\partial X)^3$. 
Also denote by $[c]\in H^2_b(G_2)$ the class represented by $c$ via the Burger-Monod isomorphism
between the bounded cohomology of $G_2$ and the cohomology of the resolution
$L^\infty_{\rm alt}((\partial X)^{\bullet +1})^{G_2}$ (see Theorem~\ref{amenable_resolution}). 
Finally, let
$\pi\colon G_1\to G_2$ be 
a homomorphism, where $G_1$ is discrete, and denote by $\Lambda(\pi(G_1))\subseteq \partial X$ the limit
set of $\pi(G_1)$.

It is then proved in~\cite[Proposition 3.1]{BurgerIozzi} that if the pull-back $\pi^*([c])\in H^2_b(G_1)$ 
vanishes, then the restriction of $c$ to $\Lambda(\pi(G_1))^3\subseteq (\partial X)^3$ is identically zero. 
The conclusion follows by applying
 this result to the case when $X=\widetilde{S}$, $E=\mathbb{R}$ on which $G_2$ acts trivially, $G_1=G_2=\Gamma$, $\pi\colon \Gamma\to\Gamma$ is the identity, and $c=\widehat{\omega}_g$.
 \end{proof}

 It is now easy to deduce Theorem~\ref{main:surface:thm} from this proposition. Indeed, suppose that $[\omega_g]=0$ in $H^2_b(S)$. 
 Since $\theta\colon H^2_b(S)\to H^2_b(\Gamma)$ is an isomorphism, Proposition~\ref{rep:boundary} then implies that
 $\widehat{\omega}_g$ represents the trivial class in $H^2_b(\Gamma)$, hence $\widehat{\omega}_g$  vanishes identically
 on $\Lambda(\Gamma)^3\subseteq (\partial \widetilde{S})^3$ by Proposition~\ref{key:prop}. However, since ideal triangles in $\widetilde{S}$ have positive area,
 the cocycle $\widehat{\omega}_g$ is non-zero on any triple of distinct points in $\mathcal{T}$. Therefore, if 
  $[\omega_g]=0$ in $H^2_b(S)$
the limit set  $\Lambda(\Gamma)$ contains at most two points. 
As discussed in Subsection~\ref{limitset}, this implies that $S$ is diffeomorphic either to $\mathbb{R}^2$ or to $S^1\times\mathbb{R}$. 

Conversely, if $S$ is diffeomorphic to $\mathbb{R}^2$ or to $S^1\times \mathbb{R}$, then $\Gamma$ is amenable, hence $H^2_b(S)\cong H^2_b(\Gamma)=0$, hence $[\omega_g]=0$ in $H^2_b(S)$. This concludes the proof of Theorem~\ref{main:surface:thm}.

\begin{rem}
Theorem~\ref{main:surface:thm} admits a different, more elementary proof, which does not make use of amenable actions. In fact, if $S$ is a complete pinched negatively curved
surface which is not diffeomorphic to $\mathbb{R}^2$ or to $S^1\times \mathbb{R}$, then by~\cite[Corollary 4.13]{portilla} $S$ contains a \emph{generalized Y-shaped piece} $S'$, i.e.~a 
convex subsurface $S'$  with (possibly empty) geodesic boundary homeomorphic to the sphere with $m$ punctures (corresponding to cusps of finite area) 
and $n$ open discs removed,
with $n+m=3$. It may also happen that two geodesic boundary components of such a generalized $Y$-shaped piece are glued to each other in $S$; in this case, 
the generalized $Y$-piece gives rise to either a once-punctured torus or a one-holed torus with geodesic boundary.
In any case, any generalized $Y$-piece in the decomposition of $S$ provides a convex subsurface $S'$ of finite area with compact geodesic boundary. By exploiting 
the fact that the boundary components and/or the cusps of $S'$ have an amenable fundamental group, together with the fact that
the simplicial volume of $S'$ does not vanish, it is now easy to prove that the pull-back of the class $[\omega_g]\in H^2_b(S)$ in $H^2_b(S')$ is non-trivial. This implies
in particular that $[\omega_g]\neq 0$ in $H^2_b(S)$.

However, the framework introduced above via amenable actions will prove useful in the following sections, where we will investigate  the dependence of the bounded area class on the metric.
\end{rem}

\section{Semi-conjugacy of circle actions}\label{semi-conj:sec}
In this section we first recall the definition of semi-conjugacy, and then we focus on the proof of Theorem~\ref{semi-conj:thm}, which states that the circle actions associated to any pair of pinched negatively curved structures on the same surface $S$ are semi-conjugate to each other.

Let $\operatorname{Homeo}^+(S^1)$ be the group of orientation preserving homeomorphisms of the circle, let $\Gamma$ be a group, and let us consider
an action of $\Gamma$ on $S^1$, i.e.~a representation $\rho\colon \Gamma\to \operatorname{Homeo}^+(S^1)$. To such an action there is associated a very interesting bounded cohomology class, the \emph{integral bounded Euler class}, 
which carries a lot of information about the dynamics of the action.
Indeed, a fundamental result by Ghys ensures that two circle actions of the same group share the same integral bounded Euler class if and only if they are semi-conjugate~\cite{Ghys0,Ghys1,Ghys2}. 
For a detailed account on the history of the notion of semi-conjugacy, together with a discussion of the equivalence of several definitions we refer the reader to~\cite{BFH}.

	Let us fix an identification between $S^1$ and the quotient $\mathbb{R}/\mathbb{Z}$, with corresponding  covering projection $\mathbb{R}\to S^1$.
	Let us consider an ordered $k$-tuple $(x_1,\ldots,x_k)\in (S^1)^k$. We say that such a $k$-tuple is
		\begin{itemize}
			\item \emph{positively oriented} if there exist lifts $\widetilde{x}_i\in \mathbb{R}$ of the $x_i$ such that
			$$
			\widetilde{x}_1<\widetilde{x}_2<\dots<\widetilde{x}_k<\widetilde{x}_1+1\ ,
			$$
			\item 
			\emph{weakly positively oriented} if there exist
			lifts $\widetilde{x}_i\in \mathbb{R}$ of the $x_i$ such that
			$$
			\widetilde{x}_1\leq \widetilde{x}_2\leq \dots\leq \widetilde{x}_k\leq \widetilde{x}_1+1\ .
			$$

			\end{itemize}
		Then, we say that
		a (not necessarily continuous) map $\varphi\colon S^1\to S^1$ is \emph{increasing of degree one} if every positively oriented $k-$tuple is sent to a weakly positively oriented $k$-tuple (for example, any constant map $\varphi\colon S^1\to S^1$ is increasing of degree one). It turns out  that it is sufficient to check this property on $4$-tuples, i.e., a map is increasing of degree one if the following condition holds:
		if $(x_1,\ldots,x_4)\in (S^1)^4$ is a positively oriented 4-tuple, then 
		$(\varphi(x_1),\dots,\varphi(x_4))$ is weakly positively oriented (see~\cite[Lemma 10.12]{Frigeriobook}).

		\begin{defn}
			Let $\rho_1,\rho_2\colon G\to \omeo$ be circle actions.
			A \emph{left semi-conjugacy} from $\rho_1$ to $\rho_2$ is a non-decreasing degree one map $\varphi\colon S^1 \to S^1$ such that
			\[\rho_1(\gamma)\varphi=\varphi\rho_2(\gamma) \quad \text{for every} \quad \gamma\in G.\]
			If such a map exists, the action $\rho_1$ is said to be \emph{left semi-conjugate} to $\rho_2$ and $\rho_2$ is \emph{right semi-conjugate} to $\rho_1$. Finally, we say that $\rho_1$ and $\rho_2$
			are \emph{semi-conjugate} if they are both left semi-conjugate and right semi-conjugate.
		\end{defn}
	
	A complete pinched negatively curved metric on a surface naturally induces an action of its fundamental group on the circle. Indeed, let $S$ be an oriented surface (possibly, of infinite type) endowed with such a metric $g$ and let $\Gamma\coloneqq \pi_1(S)$ be its fundamental group. 
The universal covering $\widetilde{S}$ is diffeomorphic to an open disc and has a natural boundary $\partial \widetilde{S}$ homeomorphic to $S^1$. We may thus fix an identification of $\partial \widetilde{S}$ with $S^1$. The metric $g$ induces a holonomy representation $\Gamma\to \isom^+(\widetilde{S})$, which is well defined up to conjugacy. An element of $\isom^+(\widetilde{S})$ uniquely extends to an element of $\operatorname{Homeo}^+(\partial\widetilde{S})\cong \omeo$. Thus, we get a circle action which we will denote by $\rho_g\colon \Gamma\to \omeo$. Different holonomy representations of the same Riemannian structure and different identifications between $\partial \widetilde{S}$ and $S^1$ induce conjugate (hence, semi-conjugate) circle actions.

\subsection{Proof of Theorem~\ref{semi-conj:thm}}

The rest of the section will be devoted to the proof of Theorem~\ref{semi-conj:thm}, which will be broken down in some preliminary lemmas and propositions.
Let $g_1$, $g_2$ be complete pinched negatively curved structures on $S$, and let $\rho_1,\rho_2\colon \pi_1(S)\to \omeo$ be the corresponding
circle actions.

If $\pi_1(S)$ is  trivial or isomorphic to $\mathbb{Z}$, then both $\rho_1$ and $\rho_2$ admit a global fixed point, hence they are semi-conjugate (see, e.g., \cite[Corollary 4.2]{BFH}). We may thus assume that $\rho_i(\pi_1(S))$ is non-elementary for $i=1,2$.

Let $p_i\colon \widetilde{S}_i\to S$ be the Riemannian universal covering of $S$ associated to the structure $g_i$, $i=1,2$. Also fix a lift of the identity, i.e., an orientation-preserving homeomorphism
$\psi\colon \widetilde{S}_1\to \widetilde{S}_2$ such that $p_2\circ \psi=p_1$. With a slight abuse, we denote by $\rho_i\colon \pi_1(S)\to \isom^+(\widetilde{S}_i)$ both the 
holonomy representation associated to the covering $p_i$, and 
the induced circle action $\rho_i\colon \pi_1(S)\to \text{Homeo}^+(\partial \widetilde{S}_i)$.
By construction, for every $\gamma\in \pi_1(S)$
we have $\psi\circ \rho_1(\gamma)=\rho_2(\gamma)\circ \psi$ as maps from $\widetilde{S}_1$ to $\widetilde{S}_2$.

We would like to define an increasing map of degree one $\varphi\colon \partial \widetilde{S}_1\to \partial \widetilde{S}_2$ by ``extending'' $\psi$ to the boundary at infinity. Unfortunately,
no continuous extension of $\psi$ exists in general (even when $S$ is of finite type), hence some work is in order. 

With notation as in Subsection~\ref{limitset}, for $i=1,2$, let us set
$H_i=H(\rho_i(\pi_1(S)))$, $A_i=A(\rho_i(\pi_1(S))$ and $\Lambda_i=\Lambda(\rho_i(\pi_1(S))$.
Also recall that, if  $g\in\rho_i(\pi_1(S))$ is a hyperbolic isometry, we denote by $g^+\in\partial\widetilde{S}_i$ (resp.~$g^-\in\partial\widetilde{S}_i$) the attractive (resp.~repulsive) fixed point of $g$.
Moreover,  if $g$ is parabolic, then we set $g^+=g^-=p$, where $p$ is the unique fixed point of $g$ in $\partial\widetilde{S}_i$.

We first define 
the map $\varphi$ on the subset $H_1\subseteq \partial\widetilde{S}_1$ as follows: if $p\in H_1$, then there exists a unique primitive element $\gamma\in \pi_1(S)$ such that $p=\rho_1(\gamma)^+$,
and we set $\varphi(p)=\rho_2(\gamma)^+$ (note that, in this way, if $\rho_2(\gamma)$ is parabolic then both the attractive
and the repulsive fixed points of $\rho_1(\gamma)$  are sent to the same point in $\partial \widetilde{S}_2$, hence $\varphi$ is not injective).

\begin{lemma}\label{order1}
Let $(x_1,\ldots,x_4)\in (\partial \widetilde{S}_1)^4$ be a positively oriented 4-tuple such that $(x_1,x_3)\in A_1$ and $(x_2,x_4)\in A_1$. 
Then  $(\varphi(x_1),\varphi(x_3))\in A_2$, $(\varphi(x_2),\varphi(x_4))\in A_2$ and
$(\varphi(x_1),\ldots,\varphi(x_4))\in (\partial \widetilde{S}_2)^4$ is positively oriented.
\end{lemma}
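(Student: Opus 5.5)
Write $x_1=\rho_1(\alpha)^-$, $x_3=\rho_1(\alpha)^+$ and $x_2=\rho_1(\beta)^-$, $x_4=\rho_1(\beta)^+$, with $\alpha,\beta\in\pi_1(S)$ primitive and $\rho_1(\alpha),\rho_1(\beta)$ hyperbolic. By definition of $\varphi$ we then have $\varphi(x_3)=\rho_2(\alpha)^+$ and $\varphi(x_1)=\rho_2(\alpha^{-1})^+=\rho_2(\alpha)^-$, and similarly for $\beta$.

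The strategy is to translate the positive orientation of the 4-tuple into a purely topological statement on $S$ that does not depend on the metric. Since $(x_1,x_2,x_3,x_4)$ is positively oriented, the axes of $\rho_1(\alpha)$ and $\rho_1(\beta)$ in $\widetilde{S}_1$ are geodesics whose endpoint pairs \emph{link} on $\partial\widetilde{S}_1$. Hence they meet transversally in exactly one point of $\widetilde{S}_1$, and the sign of the intersection is determined by the cyclic order of the four endpoints. Projecting to $S$, the closed $g_1$-geodesics representing $\alpha$ and $\beta$ have a transverse intersection point coming from this lift. Moreover, the lifts $\widetilde{\alpha}:=\psi(\text{axis of }\rho_1(\alpha))$ and $\widetilde\beta:=\psi(\text{axis of }\rho_1(\beta))$ are $\rho_2(\alpha)$- resp. $\rho_2(\beta)$-invariant proper lines in $\widetilde{S}_2$ meeting transversally in exactly one point (since $\psi$ is a homeomorphism) with the same local sign.

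First I show that $\rho_2(\alpha)$ and $\rho_2(\beta)$ are hyperbolic, which gives the two memberships in $A_2$. A parabolic element of $\rho_2(\pi_1(S))$ corresponds to a loop homotopic into a cusp. Such a loop is freely homotopic to arbitrarily short curves, and therefore to curves disjoint from any given compact set. On the other hand, $\alpha$ has nonzero geometric intersection with $\beta$. The cleanest way to see this is in $\widetilde{S}_1$: every lift of a curve freely homotopic to $\alpha$ is a $\rho_1(\alpha)$-invariant proper line with endpoints $x_1,x_3$, so it must cross the axis of $\beta$, which separates $x_1$ from $x_3$. Hence every representative of $\alpha$ meets the closed geodesic $\beta$, which is compact. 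This contradicts $\alpha$ being homotopic into a cusp region avoiding it. The same argument applies to $\beta$.

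Next, in $\widetilde{S}_2$ the curve $\widetilde\alpha$ is a $\rho_2(\alpha)$-invariant proper line, and it lies at bounded distance from the $g_2$-axis of $\rho_2(\alpha)$: use the compact fundamental domain for the $\langle\alpha\rangle$-action on the line, together with equivariance. Hence $\widetilde\alpha$ converges to $\rho_2(\alpha)^{\pm}$ at its ends, in the correct order because $\psi$ intertwines the actions. The same holds for $\widetilde\beta$. Two proper lines in a disc meeting transversally exactly once must have linked endpoint pairs, provided their endpoints are four distinct points. The cyclic order of the endpoints is then determined by the sign of the crossing, which $\psi$ preserves because it is orientation preserving. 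This yields that $(\varphi(x_1),\dots,\varphi(x_4))$ is positively oriented.

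The main obstacle is the distinctness of the four endpoints, together with the fact that the lines cross exactly once even after passing to geodesic representatives. I would deal with it as follows. If the endpoint sets of $\rho_2(\alpha)$ and $\rho_2(\beta)$ shared a point, discreteness would force them to share both fixed points, so they would be powers of a common element. Since $\alpha$ and $\beta$ are primitive and the fixed-point data differ under $\rho_1$, this would give $\alpha=\beta^{\pm1}$, which is impossible because $\{x_1,x_3\}\ne\{x_2,x_4\}$. Granted distinctness, the exactly-once crossing still needs care: a one-point topological crossing of the $\psi$-images could in principle hide an even number of extra crossings near infinity. I would rule this out using a parity and intersection-number argument. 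The algebraic intersection of two properly embedded lines with distinct endpoints in a disc is $\pm1$ if the endpoint pairs link and $0$ otherwise, and it is invariant under bounded (equivariant) homotopy. This algebraic intersection equals the sign of the single transverse crossing, so the endpoint pairs link with the prescribed orientation.
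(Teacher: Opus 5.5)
Your proof is correct and has the same skeleton as the paper's: rule out parabolicity of $\rho_2(\alpha),\rho_2(\beta)$; use equivariance to identify the ends of $\widetilde\alpha,\widetilde\beta$ with the $\rho_2$-fixed points; then carry the single positively oriented crossing through the orientation-preserving $\psi$. Only the parabolicity step is argued differently.

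The paper stays in $\widetilde S_2$. If $\rho_2(\alpha)$ were parabolic, both ends of $\widetilde\alpha$ would converge to one point $\bar y$, so one component of $\widetilde S_2\setminus\widetilde\alpha$ would have $\bar y$ as its only ideal point. The half of $\widetilde\beta$ lying in that component forces $\rho_2(\beta)$ to fix $\bar y$. Discreteness then gives $\alpha^m=\beta^n$, contradicting the disjointness of the $\rho_1$-fixed point sets.

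You argue downstairs instead. A parabolic class has representatives of arbitrarily small $g_2$-length, hence representatives missing the compact closed $g_1$-geodesic $\beta$, while every representative of $\alpha$ must cross $\beta$. This is valid, but it relies on two facts you state without justification. First, parabolic isometries of a pinched Hadamard surface have zero infimal displacement; this holds because they preserve horocycles at their fixed point and $K\le -a^2$. Second, short essential loops avoid compacta, which follows from the injectivity radius being bounded below on compact sets. The paper uses only discreteness. Moreover, its common-fixed-point argument is exactly your distinctness argument, so it settles hyperbolicity and the distinctness of the four image points at once.

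In return, you make explicit two points the paper leaves to ``readily imply'': that the four points $\varphi(x_i)$ are pairwise distinct, and why one transverse crossing forces linking. On the latter, your worry about hidden crossings near infinity is unfounded. Since $\psi$ is a bijection, $\widetilde\alpha\cap\widetilde\beta$ is a single point. Once the endpoints are distinct, the closed arc $\widetilde\alpha\cup\{\varphi(x_1),\varphi(x_3)\}$ separates the closed disc, so Jordan separation alone gives linking. The sign of the crossing then gives the cyclic order, with no need for homotopy invariance of intersection numbers or for ``geodesic representatives''.
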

\begin{proof}
The hypothesis implies 
that  there exist hyperbolic isometries $\gamma,\gamma'\in \pi_1(S)$ such that 
$x_1=\rho_1(\gamma)^-$, $x_3=\rho_1(\gamma)^+$, $x_2=\rho_1(\gamma')^-$, $x_4=\rho_1(\gamma')^+$.
We denote by $\alpha\subseteq \widetilde{S}_1$ (resp.~ $\alpha'\subseteq \widetilde{S}_1$)  the axis of the hyperbolic isometry of $\rho_1(\gamma)$
(resp.~of $\rho_1(\gamma')$), oriented from $x_1$ to $x_3$
(resp.~from $x_2$ to $x_4$).

We first observe that both $\rho_2(\gamma)$ and $\rho_2(\gamma')$ are hyperbolic. In fact, if one of them, say $\rho_2(\gamma)$, were parabolic,
then both endpoints of $\psi(\alpha)$ would coincide with a single point $\overline{y}\in\partial \widetilde{S}_2$. Moreover,
the closure at infinity of one of the two connected components of $\widetilde{S}_2\setminus \psi(\alpha)$ would contain only the point $\overline{y}$. Since such component
would contain a half-line of $\psi(\alpha')$, also the isometry $\rho_2(\gamma')$ would fix $\overline{y}$. Therefore, the isometries
$\rho_2(\gamma)$ and $\rho_2(\gamma')$ would share a common fixed point. Since they generate a discrete subgroup of $\isom^+(\widetilde{S}_2)$, this would imply
that $\rho_2(\gamma)^m=\rho_2(\gamma')^n$ for some $m\neq 0$, $n\neq 0$, hence $\gamma^m=(\gamma')^n$. But this contradicts the fact that
the fixed points of $\rho_1(\gamma)$ are disjoint from the fixed points of $\rho_1(\gamma')$. 

We then denote by $y_1,y_3$ the repulsive and the attracting fixed points of $\rho_2(\gamma)$ and 
by $y_2,y_4$ the repulsive and the attracting fixed points of $\rho_2(\gamma')$. Since $\psi\circ \rho_1(\gamma')=\rho_2(\gamma')\circ\psi$, we have
$$y_1=\lim_{t\to -\infty} \psi(\alpha(t))\ ,\qquad  y_3=\lim_{t\to +\infty} \psi(\alpha(t))\ ,$$
$$y_2=\lim_{t\to -\infty} \psi(\alpha'(t))\ , \qquad y_4=\lim_{t\to +\infty} \psi(\alpha'(t))\ .$$
Since $(x_1,x_2,x_3,x_4)$ is positively oriented,  the axes
$\alpha$ and $\alpha'$ transversely intersect at one point $p\in\widetilde{S}_1$,  and the speeds
of $\alpha$ and $\alpha'$ at $p$ define a positively oriented basis of $T_p \widetilde{S}_1$. Since $\psi$ is orientation-preserving,
this implies that also the topological lines
$\psi(\alpha)$ and $\psi(\alpha')$ transversely intersect at $\psi(p)\in\widetilde{S}_2$, and
the speeds
of $\psi\circ\alpha$ and $\psi\circ\alpha'$ at $\psi(p)$ define a positively oriented basis of $T_{\psi(p)} \widetilde{S}_2$. These facts readily imply that the 
$4$-tuple $(\varphi(x_1),\ldots,\varphi(x_4))=(y_1,y_2,y_3,y_4)$ is positively oriented, thus concluding the proof.
\end{proof}

\begin{lemma}\label{order2}
Let $n\in \matN$ and let $(y_1,z_1,y_2,z_2,\ldots,y_n,z_n)\in (\partial \widetilde{S}_1)^{2n}$ be a positively oriented $2n$-tuple such that $(y_i,z_i)\in A_1$ for every $i=1,\dots,n$. 
Then $$(\varphi(y_1),\varphi(z_1)\ldots,\varphi(y_{n}),\varphi(z_n))\in (\partial \widetilde{S}_2)^{2n}$$ is weakly positively oriented.
\end{lemma}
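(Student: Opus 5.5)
The plan is to reduce the statement to the behaviour of the images of the disjoint axes under $\psi$, in the same spirit as Lemma~\ref{order1}. Let $\gamma_i\in\pi_1(S)$ be the element with $y_i=\rho_1(\gamma_i)^-$ and $z_i=\rho_1(\gamma_i)^+$, and let $\alpha_i\subseteq\widetilde{S}_1$ be its axis, oriented from $y_i$ to $z_i$. Since the $2n$-tuple is positively oriented, the arcs $[y_i,z_i]$ of $\partial\widetilde{S}_1$ are pairwise disjoint and follow each other in the cyclic order. Hence the geodesics $\alpha_i$ are pairwise disjoint. Moreover, each $\alpha_j$ with $j\neq i$ lies in the same complementary component $U_i$ of $\alpha_i$, namely the one whose closure at infinity is the arc from $z_i$ to $y_i$, i.e.\ the one lying on a fixed side (say the left) of $\alpha_i$. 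The lines $\psi(\alpha_i)$ are then pairwise disjoint properly embedded topological lines in $\widetilde{S}_2$. Since $\psi$ is an orientation-preserving homeomorphism, each $\psi(\alpha_j)$, $j\ne i$, lies in $\psi(U_i)$, which is the left side of $\psi(\alpha_i)$ with its induced orientation.

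Next I will identify the endpoints. Exactly as in Lemma~\ref{order1}, equivariance gives
\[
\lim_{t\to-\infty}\psi(\alpha_i(t))=\rho_2(\gamma_i)^-=\varphi(y_i),\qquad \lim_{t\to+\infty}\psi(\alpha_i(t))=\rho_2(\gamma_i)^+=\varphi(z_i).
\]
This holds both when $\rho_2(\gamma_i)$ is hyperbolic and when it is parabolic, in which case the two limits coincide. To justify it, note that $\psi(\alpha_i(t+\ell))=\rho_2(\gamma_i)\psi(\alpha_i(t))$ and that $\psi(\alpha_i)$ stays at bounded distance from a $\rho_2(\gamma_i)$-orbit, so the forward and backward orbits converge to the corresponding fixed points.

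I then use a closure argument. Let $J_i\subseteq\partial\widetilde{S}_2$ be the closed arc going positively from $\varphi(z_i)$ to $\varphi(y_i)$. This arc is the accumulation set at infinity of the side $\psi(U_i)$, or the whole circle in the parabolic degenerate case. Its complement is the open arc from $\varphi(y_i)$ to $\varphi(z_i)$, and that open arc is contained in the closure at infinity of the other side only. Since each $\psi(\alpha_j)$ with $j\neq i$ lies in $\psi(U_i)$, its endpoints $\varphi(y_j),\varphi(z_j)$ lie in $J_i$. So for every $i$, all the points $\varphi(y_j),\varphi(z_j)$ with $j\neq i$ lie in the arc from $\varphi(z_i)$ to $\varphi(y_i)$. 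In addition, each pair $\varphi(y_j),\varphi(z_j)$ bounds an arc disjoint from the other pairs, so the pairs are unlinked. The final step is combinatorial. Inductively on $n$, these "each pair sees all the others on its left" and "unlinked" conditions force a cyclic weak ordering compatible with $(\varphi(y_1),\varphi(z_1),\dots,\varphi(y_n),\varphi(z_n))$. For $n\le 2$ this is immediate. For the induction I will apply the hypothesis for $n-1$ to the tuple with the pair $i=n$ removed, and then insert $\varphi(y_n),\varphi(z_n)$ into the gap between $\varphi(z_{n-1})$ and $\varphi(y_1)$. This placement is forced because the $n$-th pair must be unlinked from all other pairs and must have them all in $J_n$. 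The insertion also has to be consistent with the cyclic order of the original axes, so I also need that the $\psi(\alpha_j)$ with $j\neq i$ appear in the correct order inside $\psi(U_i)$. That order follows from the planar separation properties preserved by $\psi$.

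The main obstacle is making the boundary-closure claim rigorous, because $\psi$ does not extend continuously to $\partial\widetilde{S}_1$. The precise claim is that the endpoints of a topological line contained in one side of $\psi(\alpha_i)$ must lie in the corresponding closed arc. I would prove it using that the limit set of the region bounded by the quasi-line $\psi(\alpha_i)$, which is invariant under $\rho_2(\gamma_i)$, is separated at infinity exactly at $\varphi(y_i),\varphi(z_i)$, because $\psi(\alpha_i)$ is a proper line converging to these two points. The degenerate parabolic and coincident-endpoint cases are exactly why the conclusion is only weak positivity.
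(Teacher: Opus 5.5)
Your set-up matches the paper's. The axes $\alpha_i$ are pairwise disjoint, each $\alpha_j$ with $j\neq i$ lies on the left side $U_i$ of $\alpha_i$, the orientation-preserving homeomorphism $\psi$ gives $\psi(\alpha_j)\subseteq\psi(U_i)$, and equivariance identifies the ends of $\psi(\alpha_i)$ with $\varphi(y_i)$ and $\varphi(z_i)$. The closure claim you call the main obstacle is harmless: $\psi(\alpha_i)$ plus its two limit points is a Jordan arc in the closed disc $\widetilde{S}_2\cup\partial\widetilde{S}_2$, so it cuts the disc into two pieces meeting $\partial\widetilde{S}_2$ in complementary closed arcs.

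\textbf{The gap is in the combinatorial step.} All you extract is pairwise information: for each $i$, the points $\varphi(y_j),\varphi(z_j)$ with $j\neq i$ lie in $J_i$. For $n\geq 3$ this does not determine the cyclic order. Take distinct points $a_1,b_1,a_2,b_2,a_3,b_3$ lying on the circle in the positive cyclic order $(a_1,b_1,a_3,b_3,a_2,b_2)$. The pairs are unlinked, and each closed arc from $b_i$ to $a_i$ contains the other four points. Yet $(a_1,b_1,a_2,b_2,a_3,b_3)$ is not weakly positively oriented. So your claim that unlinkedness and $J_n$ force the $n$-th pair into the gap between $\varphi(z_{n-1})$ and $\varphi(y_1)$ is false. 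Your fallback, that the lines $\psi(\alpha_j)$ appear in the correct order inside $\psi(U_i)$ ``by planar separation'', is exactly the content of the lemma, asserted rather than proved. Since $\psi$ does not extend continuously to $\partial\widetilde{S}_1$, it cannot be taken for granted.

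\textbf{The missing ingredient} is an invariant that sees all the lines at once, and this is what the paper uses. It takes the region $C$ bounded by all the axes simultaneously. Its boundary lines occur in a definite cyclic order, and the orientation-preserving map $\psi$ carries this order to $\psi(C)$.

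One way to make this rigorous is as follows. Choose $p_i^-$ before $p_i^+$ on each $\alpha_i$, and let $P\subseteq C$ be the geodesic polygon with consecutive vertices $p_1^-,p_1^+,\dots,p_n^-,p_n^+$. Since $C$ lies to the left of every $\alpha_i$, this vertex order is positive on $\partial P$. The $2n$ rays of the $\alpha_i$ outside the segments $[p_i^-,p_i^+]$ meet $P$ only at their initial points. Their images under $\psi$ are arcs in the closed annulus between the Jordan curve $\psi(\partial P)$ and $\partial\widetilde{S}_2$, with the following properties:
\begin{itemize}
\item they start at $\psi(p_1^-),\psi(p_1^+),\dots,\psi(p_n^+)$, in positive cyclic order because $\psi$ preserves orientation;
\item they end at $\varphi(y_1),\varphi(z_1),\dots,\varphi(z_n)$;
\item they are disjoint except possibly at their endpoints on $\partial\widetilde{S}_2$.
\end{itemize}
Such arcs meet the two boundary circles of an annulus in weakly the same cyclic order. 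This gives the lemma for every $n$ directly, with no induction.
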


\begin{proof}
For each $i = 1, \dots, n$, let $\gamma_i \in \pi_1(S)$ be the primitive element such that $y_i = \rho_1(\gamma_i)^-$ and $z_i = \rho_1(\gamma_i)^+$. We denote by
$\alpha_i \subseteq \widetilde{S}_1$ the axis of  $\rho_1(\gamma_i)$, oriented from $y_i$ to $z_i$.

Because the $2n$-tuple $(y_1, z_1, \dots, y_n, z_n)$ is positively oriented, the axes $\alpha_i$ are pairwise disjoint. Let $C \subseteq \widetilde{S}_1$ be the closure in $\partial\widetilde{S}\cup \widetilde{S}$ of the region bounded by $\bigcup_{i=1}^n \alpha_i$. By construction, the orientation of the boundary $\partial C$ agrees with the orientation of each individual axis $\alpha_i$.

The map $\psi: \widetilde{S}_1 \to \widetilde{S}_2$ is an orientation-preserving diffeomorphism. Therefore, it sends the oriented boundary of $C$ into the oriented boundary of $\psi(C)$. Since $\psi \circ \rho_1(\gamma_i) = \rho_2(\gamma_i) \circ \psi$, we have
$$ \lim_{t \to -\infty} \psi(\alpha_i(t))=\rho_2(\gamma_i)^-=\varphi(y_i)  \quad \text{and} \quad \lim_{t \to +\infty} \psi(\alpha_i(t))=\rho_2(\gamma_i)^+=\varphi(z_i). $$
Note that, if $\rho_2(\gamma_i)$ is parabolic, then $\rho_2(\gamma_i)^-=\rho_2(\gamma_i)^+$, hence  $\varphi(y_i) = \varphi(z_i)$. 

Because $\psi$ sends the oriented boundary of $C$ onto the oriented boundary of $\psi(C)$, the $2n$ limit points obtained as the closure of the topological lines $\psi(\alpha_i)$ must appear in $\partial\widetilde{S}_2$ in the same  cyclic order as the endpoints of the $\alpha_i$ in $\partial \widetilde{S}_1$. Since $(y_1,z_1,y_2,z_2,\ldots,y_n,z_n)\in (\partial \widetilde{S}_1)^{2n}$ is positively oriented,
and considering the 
possible collapse of the points $\varphi(y_i)$ and $\varphi(z_i)$ when $\rho_2(\gamma_i)$ is parabolic, 
we may conclude that the resulting $2n$-tuple $(\varphi(y_1), \varphi(z_1), \dots, \varphi(y_n), \varphi(z_n))$ is weakly positively oriented.
\end{proof}

We are now ready to prove that $\varphi$ is order-preserving on $4$-tuples of  fixed points of hyperbolic isometries in $\rho_1(\pi_1(S))$:

\begin{prop}\label{order3}
Let  $(x_1,\ldots,x_4)\in H_1^4$ be a positively oriented 4-tuple. Then 
 $(\varphi(x_1),\dots,\varphi(x_4))$ is weakly positively oriented. 
 \end{prop}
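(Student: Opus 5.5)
Each $x_j\in H_1$ is of the form $\rho_1(\gamma_j)^{\epsilon_j}$ for a primitive $\gamma_j$, with $\epsilon_j\in\{+,-\}$. The other endpoint $x_j'$ of the axis of $\rho_1(\gamma_j)$ lies somewhere on $\partial\widetilde{S}_1$, so the four axes $\alpha_j$ can be arranged in many ways. The plan is to reduce to Lemmas~\ref{order1} and~\ref{order2} by replacing each $x_j$ with a pair in $A_1$ that is well adapted to the 4-tuple. By Lemma~\ref{dense}, $A_1$ is dense in $(\Lambda_1\times\Lambda_1)\setminus\Delta$. Since $H_1\subseteq\Lambda_1$ and $\Lambda_1$ is perfect (as $\Gamma$ is non-elementary), we can choose new pairs with one prescribed endpoint.

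Concretely, write $x_j=\rho_1(\gamma_j)^+$; the case $x_j=\rho_1(\gamma_j)^-$ is symmetric, using $\gamma_j^{-1}$. I would use the following trick. Pick $h\in\pi_1(S)$ with $\rho_1(h)$ hyperbolic and both endpoints $h^\pm$ in a small arc $J_j$ around $x_j$ that avoids the other $x_k$. This is possible by density, since $x_j\in\Lambda_1$ is not isolated. Then $\rho_1(\gamma_j^N h\gamma_j^{-N})$ has endpoints $\rho_1(\gamma_j)^N(h^\pm)$, which converge to $x_j$ as $N\to\infty$, except for the degenerate case where $h^\pm$ is the repelling point of $\gamma_j$; we can avoid this. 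This is not yet enough, because we need control of $\varphi$ at $x_j$ itself. A better route is to use pairs having $x_j$ as an endpoint: $(x_j',x_j)\in A_1$ is the axis of $\gamma_j$.

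The cleaner approach I would carry out is as follows. The map $\varphi$ on $H_1$ is defined through axes, and the cyclic order of images of axis endpoints is controlled by the images $\psi(\alpha)$ of the axes, which are topological lines with the prescribed limits. Consider the four axes $\alpha_1,\dots,\alpha_4$ through the points $x_j$ and their other endpoints $x_j'$. Any two of these axes are either disjoint or cross once. Lemma~\ref{order1} (for crossing pairs) and Lemma~\ref{order2} (for disjoint configurations, with $n=2$ or more) give the weak cyclic order of the images of all endpoints involved. Then a combinatorial case analysis on the relative positions of the eight points $x_j,x_j'$ should yield the weak positive orientation of $(\varphi(x_1),\dots,\varphi(x_4))$.

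When the axes do not directly separate the $x_j$ in the needed way, I would insert auxiliary axes from $A_1$, chosen by Lemma~\ref{dense}, with endpoints in the open arcs $(x_j,x_{j+1})$. Such auxiliary axes are disjoint from one another and separate the points. They are chosen so that each $\alpha_j$ either crosses the auxiliary axes in a controlled way or stays on one side of them. Disjointness or crossing is preserved under $\psi$, and $\psi$ preserves orientation, so the topological picture in $\widetilde{S}_2$ forces $\varphi(x_j)$ into the closed arc of $\partial\widetilde{S}_2$ between the images of the adjacent auxiliary endpoints. The closed arcs are needed because of parabolic collapse. These closed arcs are cyclically ordered by Lemma~\ref{order2}, which gives the weak orientation.

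The main obstacle is the locating step. We must show that $\varphi(x_j)$ lands in the correct closed arc determined by the images of auxiliary axes, even when the axis through $x_j$ crosses auxiliary axes. The fact to use is that a half-line of $\psi(\alpha_j)$ ending at $\varphi(x_j)$ eventually stays in a fixed complementary component of $\psi(\beta)$ for each auxiliary axis $\beta$. This holds because in $\widetilde{S}_1$ the end of $\alpha_j$ at $x_j$ lies eventually in one side of $\beta$, and $\psi$ is a homeomorphism commuting with the actions. Its limit therefore lies in the closure at infinity of that side, which is the closed arc between the endpoints of $\psi(\beta)$.
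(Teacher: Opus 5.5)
Your final strategy (auxiliary axes inserted around every point) is essentially the paper's. You enclose each $x_j$ by an axis $\beta_j$ of some $\rho_1(h_j)$ with endpoints $(y_j,z_j)\in A_1$ such that $(y_j,x_j,z_j)$ is positively oriented. You apply Lemma~\ref{order2} to the resulting $8$-tuple, and then place $\varphi(x_j)$ in the arc from $\varphi(y_j)$ to $\varphi(z_j)$.

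\textbf{The gap.} The existence of the $\beta_j$ fails in one configuration. You take their endpoints, via Lemma~\ref{dense}, in the open arcs between consecutive $x_j$, and this requires each such arc to meet $\Lambda_1$. But $x_j$ and $x_{j+1}$ can be the two endpoints of a component of $\partial\widetilde{S}_1\setminus\Lambda_1$. For instance, they can be the endpoints of a lift of the closed geodesic bounding a funnel of $(S,g_1)$, and both such endpoints lie in $H_1$. In that case no axis at all separates $x_j$ from $x_{j+1}$, and your construction has nothing to work with. The paper treats exactly this case separately:
\begin{itemize}
\item By Lemma~\ref{interval}, the pair of endpoints of such a gap is itself in $A_1$.
\item Two adjacent arcs cannot both be gaps, since $\Lambda_1$ is perfect.
\item So one feeds $(x_j,x_{j+1})$ directly into Lemma~\ref{order2}, together with enclosing pairs for the remaining points.
\end{itemize}

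\textbf{The locating step needs one more observation.} You assert that the side of $\psi(\beta_j)$ containing the end of $\psi(\alpha_j)$ accumulates on the closed arc between the endpoints of $\psi(\beta_j)$. This presupposes that $\rho_2(h_j)$ is hyperbolic. If it were parabolic, the closure of $\psi(\beta_j)$ would be a loop through a single point of $\partial\widetilde{S}_2$, and one of its sides accumulates on the whole circle. Then $\varphi(x_j)$ would not be located at all, and your appeal to closed arcs does not address this. It is easily ruled out: both arcs cut out by $y_j,z_j$ meet $\Lambda_1$, so Lemma~\ref{dense} gives an axis crossing $\beta_j$, and Lemma~\ref{order1} then yields $(\varphi(y_j),\varphi(z_j))\in A_2$.

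\textbf{Comparison with the paper.} With that fix, your locating argument is a genuine alternative. You use that the end of $\alpha_j$ eventually lies on one side of $\beta_j$, and that $\psi$ preserves sides and orientation. The paper instead picks $(y_j,z_j)$ in a small neighbourhood of $x_j$ avoiding the other endpoint $x_j'$, so that $\alpha_j$ crosses $\beta_j$. It then applies Lemma~\ref{order1} to $(x_j',y_j,x_j,z_j)$, which gives hyperbolicity of $\rho_2(h_j)$ and $\varphi(x_j)$ in the open arc at once. Your version needs no crossing, so it handles a single gap endpoint $x_j$ without Lemma~\ref{interval}. It does not, however, remove the consecutive-gap case above.

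\textbf{Unneeded material.} Drop the ``combinatorial case analysis'' of the axes $\alpha_j$ alone. It is not carried out, and nested configurations of the $\alpha_j$ are not covered by Lemma~\ref{order2}. It also becomes unnecessary once auxiliary axes are inserted in every case.
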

\begin{proof}
For every $i=1,2$ and $x,y\in\partial \widetilde{S}_i$ with $x\neq y$, we denote by $\wideparen{xy}$ the  open  arc in $\partial \widetilde{S}$ with starting point $x$ and ending point $y$, where we understand that arcs are parametrized according to the orientation of $\partial \widetilde{S}_i$.
We choose pairwise disjoint neighbourhoods $U_i$ of the $x_i$ in $\partial \widetilde{S}_1$, in such a way that $U_i$ does not contain the
fixed point $x_i'\neq x_i$ of the hyperbolic isometries
in $\rho_1(\pi_1(S))$ fixing $x_i$. 

For every $i=1,2,3,4$ we replace $x_i$ with a pair $(y_i,z_i)\in A_1$ as follows. 

We first consider the generic case when $x_i$ is \emph{not} the endpoint of any interval in $\partial \widetilde{S}_1\setminus \Lambda_1$. In this case, 
each component of $U_i\setminus \{x_i\}$ intersects the limit set $\Lambda_1$, hence Lemma~\ref{dense} implies that there exists a pair
$(y_i,z_i)\in A_1$ such that $y_i\in U_i$, $z_i\in U_i$ and the triple $(y_i,x_i,z_i)$ is positively oriented. Moreover,
our choice for $U_i$ and Lemma~\ref{order1} (applied to the $4$-tuple $(x_i',y_i,x_i,z_i)$) imply that
 $(\varphi(y_i),\varphi(x_i),\varphi(z_i))$ is positively oriented, i.e., that $\varphi(x_i)\in\wideparen{\varphi(y_i)\varphi(z_i)}$.

If $x_i$ is the endpoint of an open interval $I$ in $\partial \widetilde{S}_1\setminus \Lambda_1$, then we choose $y_i$ and $z_i$ in such a way 
that $I=\wideparen{y_iz_i}$ (so that either $x_i=y_i$ or $x_i=z_i$). Thanks to Lemma~\ref{interval}, we have indeed $(y_i,z_i)\in A_1$, and by construction
the triple $(y_i,x_i,z_i)$ is weakly positively oriented. 

Let us now suppose that no pair $x_i,x_j$, $i\neq j$, is the pair of endpoints of a component of $\partial \widetilde{S}_1\setminus \Lambda_1$.
Then, it readily follows from our construction that $(y_1,z_1,y_2,z_2,y_3,z_3,y_4,z_4)\in (\partial \widetilde{S}_1)^8$ is positively oriented. By Lemma~\ref{order2},
this implies in turn that $(\varphi(y_1),\varphi(z_1)\ldots,\varphi(y_{4}),\varphi(z_4))\in (\partial \widetilde{S}_2)^{8}$ is weakly positively oriented. Recall
now, that, for every $i=1,\dots,4$, we have either $\varphi(x_i)\in\wideparen{\varphi(y_i)\varphi(z_i)}$, or $\varphi(x_i)=\varphi(y_i)$, or $\varphi(z_i)$. Thus
the $12$-tuple $$ (\varphi(y_1),\varphi(x_1), \varphi(z_1)\ldots,\varphi(y_{4}),\varphi(x_4),\varphi(z_4))\in (\partial \widetilde{S}_2)^{12}$$ is weakly positively oriented,
hence, \emph{a fortiori}, the $4$-tuple  $(\varphi(x_1),\dots,\varphi(x_4))\in (\partial \widetilde{S}_2)^{4}$ is weakly positively oriented.

If there exist $x_i,x_j$, $i\neq j$, which are the pair of endpoints of a component of $\partial \widetilde{S}_1\setminus \Lambda_1$, then the configuration we need to investigate is even easier. Of course, $x_i$ and $x_j$ must be consecutive, hence without loss of generality we may assume $i=1$ and $j=2$. If also $x_3,x_4$ are the
endpoints of a component of $\partial \widetilde{S}_1\setminus \Lambda_1$, then Lemma~\ref{interval} ensures that $(x_1,x_2)\in A_1$ and $(x_3,x_4)\in A_1$, and the conclusion
follows from Lemma~\ref{order2}. Otherwise, let $y_3,z_3,y_4,z_4$ be defined as above. Arguing as before, we get that
$(x_1,x_2,y_3,z_3,y_4,z_4)$ is weakly positively oriented, hence also $(\varphi(x_1),\varphi(x_2),\varphi(y_3),\varphi(z_3),\varphi(y_4),\varphi(z_4))$
is weakly positively oriented by Lemma~\ref{order2}. The conclusion follows as above from the fact that, for $i=3,4$,   we have either $\varphi(x_i)\in\wideparen{\varphi(y_i)\varphi(z_i)}$, or $\varphi(x_i)=\varphi(y_i)$, or $\varphi(z_i)$.
\end{proof}

\begin{cor}\label{order4}
Let  $(x_1,x_2,x_3)\in H_1^3$ be a positively oriented triple. Then the triple
 $(\varphi(x_1),\varphi(x_2),\varphi(x_3))$ is weakly positively oriented. 
 \end{cor}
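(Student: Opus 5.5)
The plan is to deduce Corollary~\ref{order4} directly from Proposition~\ref{order3} by adding a fourth point. Since $\rho_1(\pi_1(S))$ is non-elementary, the set $H_1$ is infinite. In fact, it is dense in $\Lambda_1$ by Lemma~\ref{dense}, and $\Lambda_1$ is a perfect set. So given a positively oriented triple $(x_1,x_2,x_3)\in H_1^3$, I will choose a point $x_4\in H_1$ with $x_4\notin\{x_1,x_2,x_3\}$. After a cyclic relabelling, $x_4$ lies in one of the three open arcs determined by $x_1,x_2,x_3$. Since weak positive orientation of a triple is invariant under cyclic permutations, I may assume $x_4\in\wideparen{x_3x_1}$. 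Then $(x_1,x_2,x_3,x_4)$ is a positively oriented $4$-tuple in $H_1^4$.

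Proposition~\ref{order3} then gives that $(\varphi(x_1),\varphi(x_2),\varphi(x_3),\varphi(x_4))$ is weakly positively oriented. By definition there are lifts $\widetilde{y}_1\le\widetilde{y}_2\le\widetilde{y}_3\le\widetilde{y}_4\le\widetilde{y}_1+1$. Discarding $\widetilde{y}_4$ leaves $\widetilde{y}_1\le\widetilde{y}_2\le\widetilde{y}_3\le\widetilde{y}_1+1$, which says exactly that $(\varphi(x_1),\varphi(x_2),\varphi(x_3))$ is weakly positively oriented. Finally, I undo the cyclic relabelling, again using that weak positive orientation is invariant under cyclic permutations of the triple.

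The only step needing care is the existence of the fourth point $x_4\in H_1$. I would justify it as follows. Since $\rho_1(\pi_1(S))$ is non-elementary, $\Lambda_1$ is infinite, so $(\Lambda_1\times\Lambda_1)\setminus\Delta$ contains infinitely many pairs with pairwise distinct coordinates. By Lemma~\ref{dense}, $A_1$ is dense in this set, and hence $H_1$, which contains both coordinates of every pair in $A_1$, is infinite.

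An alternative that avoids this step is to rerun the argument of Proposition~\ref{order3} with three points instead of four. This uses Lemma~\ref{order2} with $n=3$ and the same replacement of each $x_i$ by a pair $(y_i,z_i)\in A_1$. I expect no genuine obstacle either way.
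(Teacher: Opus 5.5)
Your proposal is correct and follows the paper's proof: since the group is non-elementary, you extend the triple by a fourth point of $H_1$, apply Proposition~\ref{order3}, and discard the extra point. Your cyclic relabelling and your justification via Lemma~\ref{dense} that $H_1$ is infinite just spell out details the paper leaves implicit; the paper instead inserts the new point at an arbitrary position $i_1<i_2<i_3$.
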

 \begin{proof}
 Since $\rho_1(\pi_1(S))$ is not elementary, we may extend any positively oriented triple $(x_1,x_2,x_3)\in H_1^3$ to a positively oriented $4$-tuple
  $(y_1,y_2,y_3,y_4)\in H_1^4$, where $x_1=y_{i_1}$, $x_2=y_{i_2}$, $x_3=y_{i_3}$, $i_1<i_2<i_3$.  Proposition~\ref{order3} now implies
  that $(\varphi(y_1),\dots,\varphi(y_4))$ is weakly positively oriented, hence also
  $(\varphi(y_{i_1}),\varphi(y_{i_2}),\varphi(y_{i_3}))=(\varphi(x_1),\varphi(x_2),\varphi(x_3))$ is weakly positively oriented. 
 \end{proof}

Before extending $\varphi$ to the whole $\partial \widetilde{S}_1$, we observe that $\varphi\colon H_1\to \partial \widetilde{S}_2$ is equivariant with respect to the actions $\rho_1$, $\rho_2$. In fact, if $\gamma\in \pi_1(S)$
and $x=\rho_1(\gamma_0)^+$ for some $\gamma_0\in\pi_1(S)$, then $\rho_1(\gamma)(x)$ is the attractive fixed point of
$\rho_1 (\gamma \gamma_0\gamma^{-1})$, hence 
$$\varphi(\rho_1(\gamma)(x))=\rho_2 (\gamma \gamma_0\gamma^{-1})^+=\rho_2(\gamma)(\rho_2(\gamma_0)^+)=
\rho_2(\gamma)(\varphi(x))\ .$$
We have thus shown that
\begin{equation}\label{equivariance}
\varphi(\rho_1(\gamma)(x))=\rho_2(\gamma)(\varphi(x))\quad \text{for every}\ \gamma\in\pi_1(S)\, ,\ x\in H_1 \ .
\end{equation}

Let us now choose an identification of $\partial\widetilde{S}_1$  with $S^1=\mathbb{R}/\mathbb{Z}$, in such a way that $[0]\in H_1\subseteq S^1\cong \partial S_1$.
Also identify  $\partial\widetilde{S}_2$ with $S^1$ in such a way that $\varphi([0])=[0]$. Let us denote by $p\colon \mathbb{R}\to \mathbb{R}/\mathbb{Z}=S^1$ the covering 
projection, and set $\widetilde{H}_1=p^{-1}(H_1)\subseteq \mathbb{R}$. 

\begin{prop}\label{liftH}
The map $\varphi\colon H_1\to S^1$ lifts to a map $\widetilde{\varphi}\colon \widetilde{H}_1\to \mathbb{R}$ such that the following conditions hold:
\begin{enumerate}
\item $\widetilde{\varphi}$ is weakly increasing;
\item $p\circ \widetilde{\varphi}=\varphi\circ p$;
\item $\widetilde{\varphi}(\widetilde{x}+1)=\widetilde{\varphi}(\widetilde{x})+1$ for every $\widetilde{x}\in\mathbb{R}$.
\end{enumerate}
\end{prop}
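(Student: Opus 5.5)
We construct $\widetilde{\varphi}$ one fundamental domain at a time. First we fix the values on $\widetilde{H}_1\cap[0,1)$ and then extend by the rule $\widetilde{\varphi}(\widetilde{x}+n)=\widetilde{\varphi}(\widetilde{x})+n$, so that (3) holds by construction (property (3) is of course only meaningful for $\widetilde{x}\in\widetilde{H}_1$, and $\widetilde{H}_1$ is invariant under integer translations). For $\widetilde{x}\in \widetilde{H}_1\cap[0,1)$ we let $\widetilde{\varphi}(\widetilde{x})$ be the unique lift of $\varphi(p(\widetilde{x}))$ lying in $[0,1)$. With this choice $\widetilde{\varphi}(0)=0$, because $\varphi([0])=[0]$, and (2) holds on $[0,1)$; by integer-translation equivariance of $p$, it then holds everywhere.

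The real content is (1). Because of (3), it is enough to prove that $\widetilde{\varphi}$ is weakly increasing on $\widetilde{H}_1\cap[0,1)$ and that $\widetilde{\varphi}(\widetilde{x})\le 1=\widetilde{\varphi}(1)$ there. The second inequality is immediate, since all values lie in $[0,1)$.

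For the first, take $0\le \widetilde{x}<\widetilde{x}'<1$ in $\widetilde{H}_1$, and consider the two cases.
\begin{itemize}
\item If $\widetilde{x}=0$, then $\widetilde{\varphi}(0)=0\le\widetilde{\varphi}(\widetilde{x}')$ trivially.
\item If $\widetilde{x}>0$, then $([0],p(\widetilde{x}),p(\widetilde{x}'))$ is a positively oriented triple in $H_1^3$. By Corollary~\ref{order4}, its image $([0],\varphi(p(\widetilde{x})),\varphi(p(\widetilde{x}')))$ is weakly positively oriented. Choosing lifts with $0\le a\le b\le 1$ and identifying them with the representatives in $[0,1)$ gives $\widetilde{\varphi}(\widetilde{x})\le\widetilde{\varphi}(\widetilde{x}')$.
\end{itemize}

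The identification in the second case needs one check. Suppose $b=1$, i.e.\ $\varphi(p(\widetilde{x}'))=[0]$, so that the representative of $b$ in $[0,1)$ is $0$. We must then show that $\varphi(p(\widetilde{x}))=[0]$ as well, which forces its representative to be $0$ too.

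This is the main obstacle. A triple with two equal entries carries no information, so weak orientation of the triple alone cannot force $a=0$. To get around this we pick a further point $w\in H_1$ with $w\in\wideparen{p(\widetilde{x}')\,[0]}$; such a point exists because $H_1$ is dense in $\Lambda_1$ by Lemma~\ref{dense}, or we argue as in Corollary~\ref{order4}. We then apply Proposition~\ref{order3} to the positively oriented 4-tuple $([0],p(\widetilde{x}),p(\widetilde{x}'),w)$. Its image is weakly positively oriented, with $\varphi(p(\widetilde{x}'))=\varphi([0])$. Unwinding the lifts, weak positive orientation of the image tuple with first and third entries equal forces the second entry to equal the first as well, unless $\varphi$ collapses the whole circle to one point, in which case all values are $0$ anyway. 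Hence $a=0$ and monotonicity holds.

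Finally, the extension by integer translations preserves weak monotonicity across fundamental domains, since every value on $[n,n+1)$ lies in $[n,n+1)$. This gives (1).
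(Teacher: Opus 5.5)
Your proof breaks at the step you flag as the main obstacle, and the claim you try to prove there is false. A weakly positively oriented $4$-tuple $(u,v,u,t)$ only forces $v=u$ \emph{or} $t=u$. For instance $([0],[1/2],[0],[0])$ is weakly positively oriented (lifts $0\le 1/2\le 1\le 1\le 1$). So the dichotomy ``$v=u$ or $\varphi$ is constant'' does not hold.

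Moreover, the point $w$ need not exist. $H_1$ is dense in $\Lambda_1$, not in $\partial\widetilde{S}_1$, and the arc $\wideparen{p(\widetilde{x}')\,[0]}$ can miss $\Lambda_1$ entirely. This is exactly what happens in the only situation your worry covers. Let $S$ be the thrice-punctured sphere, let $g_1$ be a hyperbolic metric with three funnels and $g_2$ one with three cusps, and let $\gamma$ be a peripheral loop.
\begin{itemize}
\item $\rho_1(\gamma)$ is hyperbolic, and its fixed points $x_0,x^*$ bound a component of $\partial\widetilde{S}_1\setminus\Lambda_1$.
\item $\rho_2(\gamma)$ is parabolic, so $\varphi(x_0)=\varphi(x^*)$.
\item The only constraint on the identification is $[0]\in H_1$, so you may have $x_0=[0]$ with that component equal to $\wideparen{x^*\,[0]}$.
\end{itemize}
Then the lift $\widetilde{x}^*\in(0,1)$ satisfies $\varphi(p(\widetilde{x}^*))=[0]$. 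On the other hand, take any lift $\widetilde{y}\in(0,\widetilde{x}^*)$ of a fixed point of an element that is hyperbolic for both structures. It has $\varphi(p(\widetilde{y}))\neq[0]$, because in a discrete group a hyperbolic fixed point cannot also be a parabolic fixed point. Your $\widetilde{\varphi}$ therefore takes a value in $(0,1)$ at $\widetilde{y}$ and the value $0$ at $\widetilde{x}^*>\widetilde{y}$, so it is not weakly increasing. Consistently with this, no $w\in H_1$ lies in $\wideparen{x^*\,[0]}$.

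The missing idea is that $[0,1)$ is the wrong normalization. A point $\widetilde{x}\in(0,1)$ with $\varphi(p(\widetilde{x}))=[0]$ must be sent to $1$, not $0$, as soon as some earlier point of $\widetilde{H}_1\cap[0,\widetilde{x})$ has image different from $[0]$. This is the paper's three-case definition with values in $[0,1]$:
\begin{itemize}
\item value $0$ if all of $\widetilde{H}_1\cap[0,\widetilde{x}]$ maps to $[0]$;
\item value $1$ if $\widetilde{x}$ maps to $[0]$ but some earlier point does not;
\item the representative in $(0,1)$ otherwise.
\end{itemize}
In the new case, pick $\widetilde{y}\in\widetilde{H}_1\cap(0,\widetilde{x}_1)$ with $\varphi(p(\widetilde{y}))\neq[0]$ and apply Proposition~\ref{order3} to $([0],p(\widetilde{y}),p(\widetilde{x}_1),p(\widetilde{x}_2))$. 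The image $([0],\varphi(p(\widetilde{y})),[0],\varphi(p(\widetilde{x}_2)))$ is weakly positively oriented. By the dichotomy above this forces $\varphi(p(\widetilde{x}_2))=[0]$, hence $\widetilde{\varphi}(\widetilde{x}_2)=1$. Since all values are $\le 1=\widetilde{\varphi}(1)$, the periodic extension is still weakly increasing. Your remaining case, where both images differ from $[0]$ and you use Corollary~\ref{order4}, is the same as in the paper.
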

\begin{proof}
It is sufficient to define a weakly increasing map $\widetilde{\varphi}\colon \widetilde{H}_1\cap [0,1) \to [0,1]$ in such a way that
 $p( \widetilde{\varphi}(\widetilde{x}))=\varphi (p(\widetilde{x}))$ for every $\widetilde{x}\in \widetilde{H}_1\cap [0,1)$. Indeed, we may then extend such a $\widetilde{\varphi}$ to the whole $\widetilde{H}_1$  by periodicity, thus getting
 the desired lift of $\varphi$.
 
We set $\widetilde{\varphi}(0)=0$. Moreover, for every $\widetilde{x}\in \widetilde{H}_1\cap (0,1)$, we define $\widetilde{\varphi}(\widetilde{x})$ as follows:
\begin{enumerate}
\item
if $\varphi(p(\widetilde{y}))=[0]$ for every $\widetilde{y}\in \widetilde{H}_1\cap [0,\widetilde{x}]$, then $\widetilde{\varphi}(\widetilde{x})=0$; 
\item
if $\varphi(p(\widetilde{x}))=[0]$ but there exists $\widetilde{y}\in \widetilde{H}_1\cap [0,\widetilde{x})$ such that $\varphi(p(\widetilde{y}))\neq [0]$, then $\widetilde{\varphi}(\widetilde{x})=1$;
\item
if $\varphi(p(\widetilde{x}))\neq[0]$, then $\widetilde{\varphi}(\widetilde{x})$ is the unique element of $(0,1)$ such that
$p( \widetilde{\varphi}(\widetilde{x}))=\varphi (p(\widetilde{x}))$.
\end{enumerate}

	By construction, $\widetilde{\varphi}$ takes values in $[0,1]$ and is such that  $p( \widetilde{\varphi}(\widetilde{x}))=\varphi (p(\widetilde{x}))$ for every $x\in \widetilde{H}_1\cap [0,1)$.
	Therefore, in order to conclude we need to show that 
	$\widetilde{\varphi}(\widetilde{x}_1)\leq \widetilde{\varphi}(\widetilde{x}_2)$ for every $\widetilde{x}_1,\widetilde{x}_2\in\widetilde{H}_1\cap [0,1)$ such that $0\leq \widetilde{x}_1< \widetilde{x}_2<1$.
			
	Suppose first that $\varphi(p(\widetilde{x}_1))=[0]$. If $\varphi(p(\widetilde{y}))=[0]$ for every $\widetilde{y}\in \widetilde{H}_1\cap [0,x_1)$, then $\widetilde{\varphi}(\widetilde{x}_1)=0$,
	and there is nothing to prove. Otherwise, there exists $\widetilde{y}\in \widetilde{H}_1\cap [0,\widetilde{x}_1)$ such that $\varphi(p(\widetilde{y}))\neq [0]$, and $\widetilde{\varphi}(\widetilde{x}_1)=1$. Now, since the quadruple $(p(0),p(\widetilde{y}), p(\widetilde{x}_1),p(\widetilde{x}_2))$ is positively oriented, 
	by Proposition~\ref{order3} also the quadruple $$([0]=\varphi(p(0)),\varphi(p(\widetilde{y})), [0]=\varphi(p(\widetilde{x}_1)),\varphi(p(\widetilde{x}_2)))$$
	is weakly positively oriented. This forces
	$p(\widetilde{x}_2)=[0]$, hence $\widetilde{\varphi}(\widetilde{x}_2)=1$, and again $\widetilde{\varphi}(\widetilde{x}_1)\leq \widetilde{\varphi}(\widetilde{x}_2)$.
	
	Suppose now $\varphi(p(\widetilde{x}_1))\neq[0]$. If $\varphi(p(\widetilde{x}_2))=[0]$, then our construction implies that
	$\widetilde{\varphi}(\widetilde{x}_2)=1$, hence $\widetilde{\varphi}(\widetilde{x}_1)\leq \widetilde{\varphi}(\widetilde{x}_2)$.
	Otherwise, both $\widetilde{\varphi}(\widetilde{x}_1)$ and $\widetilde{\varphi}(\widetilde{x}_{2})$ belong to $(0,1)$.
	Since $([0],p(\widetilde{x}_1),p(\widetilde{x}_2))$ is positively oriented, Corollary~\ref{order4} ensures that also
	$(\varphi(p(0)), \varphi(p(\widetilde{x}_1)),\varphi(p(\widetilde{x}_2)))=([0], \varphi(p(\widetilde{x}_1)),\varphi(p(\widetilde{x}_2)))$ is weakly positively oriented.
	Due to our definition of $\widetilde{\varphi}$, these facts imply that $\widetilde{\varphi}(\widetilde{x}_1)\leq \widetilde{\varphi}(\widetilde{x}_2)$. 
	\end{proof}

It is now easy to extend $\widetilde{\varphi}$ to the whole real line, by setting, for every $t\in\mathbb{R}$,
$$
\widetilde{\varphi}(t)=\sup \{ \widetilde{\varphi}(t')\, |\,  t'\in H_1\, ,\ {t'\leq t} \}\ .
$$
It is immediate to check that
 Properties (1), (2) and (3) of Proposition~\ref{liftH} also hold for the map $\widetilde{\varphi}\colon \mathbb{R}\to \mathbb{R}$
just defined. Property (3) implies that $\widetilde{\varphi}$ induces a map $\varphi\colon S^1\to S^1$ which extends the map $\varphi$ already defined on $H_1$. 
Moreover, using that every element of $\rho_i(\pi_1(S))$, $i=1,2$, is orientation-preserving, one easily deduces from Equation~\eqref{equivariance} that
$$
\varphi(\rho_1(\gamma)(x))=\rho_2(\gamma)(\varphi(x))\quad \text{for every}\ \gamma\in\pi_1(S)\, ,\ x\in S^1 \ .
$$
Finally, being induced by a weakly increasing map $\widetilde{\varphi}$ which commutes with integral translations, the map
 $\varphi\colon S^1\to S^1$ is increasing of degree one.

 We have thus shown that $\rho_1$ is right semi-conjugate to $\rho_2$. By switching the r\^oles of $\rho_1$ and $\rho_2$ we may prove that 
 $\rho_2$ is right semi-conjugate to $\rho_1$ as well. Hence, $\rho_1$ and $\rho_2$ are semi-conjugate, and this concludes the proof of Theorem~\ref{semi-conj:thm}.

\section{Bounded Euler class vs.~bounded Area class}\label{euler:sec}
This section is devoted to the proof of Theorem~\ref{hyperbolic:equal}. Before beginning with the proof, we recall some facts about the bounded Euler class. Let  $\Gamma$ be a group acting on the circle, i.e., let us fix a representation $\rho\colon\Gamma\to \operatorname{Homeo}^+(S^1)$. 
To such a circle action there is associated a bounded cohomology class with integral coefficients,
called \emph{integral bounded Euler class} (see e.g.~\cite[Sections 10.1, 10.2]{Frigeriobook} for the definition). As mentioned in the introduction (and in the previous section), this class completely classifies the action up to semi-conjugacy, thus carrying a lot of information
on its dynamics. In this paper we are mainly interested in bounded classes with real coefficients, hence we will focus our attention on the \emph{real} Euler
bounded class, which is obtained from the integral one just by the usual change of coefficients map. We denote the real bounded Euler class
of the representation $\rho\colon \Gamma\to \operatorname{Homeo}^+(S^1)$ by $e_b(\rho)$, so that $e_b(\rho)$ is an element of $H^2_b(\Gamma)$. 
It turns out that $e_b(\rho)$ admits a very simple representative in $Z\mathcal{L}^\infty_{\text{alt}}((S^1)^{3})^\Gamma$, which we are now going to describe.

\subsection{The orientation cocycle}\label{or:sub}

		Recall that $\mathcal{T}$ denotes the set of triples $(x_0,x_1,x_2)\in (S^1)^3$ such that $x_i\neq x_j$ for $i\neq j$, and let $\ori\colon ({S^1})^3\to\mathbb{R}$  be defined as follows:
		$$
		\ori (x_0,x_1,x_2)=\left\{\begin{array}{ll} +1 & \text{if}\  (x_0,x_1,x_2)\in\mathcal{T}\ \text{is positively oriented}\\
			-1 & \text{if}\  (x_0,x_1,x_2)\in\mathcal{T}\ \text{is negatively oriented}\\
			0 & \text{if}\  (x_0,x_1,x_2)\notin\mathcal{T}\ .\end{array}\right.
		$$
		
				By construction, the map $\ori$ is bounded and measurable, i.e.,~it belongs to
		$\mathcal{L}^\infty_{\text{alt}}((S^1)^{3})$. Moreover, it is straightforward
		to check that $\delta \ori=0$, and since elements of $\Gamma$ act on $S^1$ via orientation-preserving homeomorphisms,
		the map $\ori$ is $\Gamma$-invariant. Being also alternating, $\ori$ is then
		 a strict cocycle in $Z\mathcal{L}^\infty_{\text{alt}}((S^1)^{3})^\Gamma$ (here and henceforth, we understand that $\Gamma$ acts on $S^1$, hence on $(S^1)^k$, via $\rho$). 
		 We denote by $[\ori_\rho]\in H^2_b(\Gamma)$ the bounded cohomology class represented by the strict cocycle $\ori$ (observe that, even if $\ori$ as a map does not depend on $\rho$, the bounded cohomology class it represents depends on $\rho$, as $\rho$ enters into the definition of the cochain complex 
		 $\mathcal{L}^\infty_{\text{alt}}((S^1)^{\bullet +1})^\Gamma$).
		 
		 The following result is proved in~\cite[Lemma 2.1]{Iozzi} (see also the proof of Proposition 1.7 in the same paper):
		
		\begin{prop}\label{proportionality}
			For a representation $\rho\colon \Gamma\to \omeo$, it holds that $$e_b(\rho) =-\frac{1}{2} [\ori_\rho]\qquad  \text{in}\  H_b^2(\Gamma)\ .$$
		\end{prop}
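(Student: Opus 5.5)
The plan is to compare $e_b(\rho)$ and $[\ori_\rho]$ through explicit representatives, both realised as strict cocycles on $(S^1)^3$ or as homogeneous cocycles on $\Gamma$. Following Section~\ref{or:sub}, I will not use Theorem~\ref{amenable_resolution} here, since $\Gamma$ is arbitrary and the action need not be amenable. Instead I will pass to the homogeneous bar resolution by evaluating on orbits. I fix $x\in S^1$ and consider $\ori_x(\gamma_0,\gamma_1,\gamma_2)=\ori(\rho(\gamma_0)x,\rho(\gamma_1)x,\rho(\gamma_2)x)$. This is a bounded $\Gamma$-invariant cocycle on $\Gamma^3$, and by definition (or by the analogue of \cite[Corollary 2.3]{BurgerIozzi}) it represents $[\ori_\rho]$. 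Its class does not depend on $x$: for two basepoints, a bounded primitive is obtained by the usual homotopy formula, using $\ori$ evaluated on mixed tuples.

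Next I recall the standard representative of the integral Euler class. Lift $\rho$ to $\widetilde\rho\colon\widetilde\Gamma\to\operatorname{Homeo}^+_{\mathbb Z}(\mathbb R)$. Fix $\widetilde x\in\mathbb R$ and choose for each $\gamma$ the unique lift $\widetilde\gamma$ with $\widetilde\gamma(\widetilde x)\in[\widetilde x,\widetilde x+1)$. The Euler cocycle $c(\gamma_1,\gamma_2)=\widetilde{\gamma_1}\widetilde{\gamma_2}\widetilde{\gamma_1\gamma_2}^{-1}$ takes values in $\{0,1\}$. I will write it in homogeneous form as $\epsilon(\gamma_0,\gamma_1,\gamma_2)=c(\gamma_0^{-1}\gamma_1,\gamma_1^{-1}\gamma_2)$.

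The key step is a direct computation expressing $\epsilon$ through the cyclic order of the three points $y_i=\rho(\gamma_i)x$. Set $\widetilde y_i$ to be the lifts in $[\widetilde x,\widetilde x+1)$. One checks that $c$ equals $1$ exactly when the composition wraps around once. Concretely, this happens when $(x,y_1,y_2)$-type configurations are negatively oriented, giving the identity
\[
\epsilon(\gamma_0,\gamma_1,\gamma_2)=-\tfrac12\,\ori(y_0,y_1,y_2)+\tfrac12\,\beta\ ,
\]
where $\beta$ is a coboundary of a bounded function involving whether $y_i=x$. I expect this to follow from the classical formula $c = \tfrac12\bigl(\ori(x,y_1,y_2)\text{-corrections}\bigr)$, as in Thurston/Jekel. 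I will verify it case by case on the relative positions of $x,y_0,y_1,y_2$, with the degenerate cases (coincident points) handled by including the basepoint $x$ in a term $\ori(x,\cdot,\cdot)$. That term is itself $\delta$ of the bounded $1$-cochain $(\gamma_0,\gamma_1)\mapsto \ori(x,y_0,y_1)$, up to the cocycle relation $\delta\ori(x,y_0,y_1,y_2)=0$. Hence modulo bounded coboundaries $\epsilon\sim-\tfrac12\ori_x$. Passing to real coefficients then gives $e_b(\rho)=-\tfrac12[\ori_\rho]$.

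The main obstacle is the bookkeeping in this case analysis. It involves getting the sign conventions for $c$ consistent with the chosen orientation of $\mathbb R/\mathbb Z$, and treating the degenerate triples where some $y_i$ coincide, where $\ori$ vanishes but $c$ might not. I will try first to reduce to the inhomogeneous identity
\[
c(g,h)=\tfrac12\bigl(1-\ori(x,gx,ghx)\bigr)+\tfrac12\bigl(\ori(x,gx,ghx)-\ori'(\dots)\bigr)\ ,
\]
and check it on the four generic orderings. I will then show that the degenerate contributions assemble into $\delta$ of the bounded $1$-cochain $g\mapsto \tfrac12\mathbf 1_{gx\neq x}$, or a similar bounded cochain.
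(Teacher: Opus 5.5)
The paper gives no argument of its own for this statement; it only cites \cite[Lemma 2.1]{Iozzi}. Your direct route is a sound, self-contained proof of that cited fact: compare the canonical-lift Euler cocycle with the orbit cocycle $\ori_x$, and absorb the degenerate configurations into an explicit bounded coboundary. The primitive you guess in your last paragraph is exactly the right one, and your normalization of lifts gives the sign as stated.

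Here is the check. Define $a,d\in[0,1)$ by $\widetilde g(\widetilde x)=\widetilde x+a$ and $\widetilde{gh}(\widetilde x)=\widetilde x+d$. Then $\widetilde g\widetilde h(\widetilde x)\in[\widetilde x+a,\widetilde x+a+1)$, which forces $c(g,h)=1$ if and only if $d<a$. Now check the two generic cases and the four degenerate ones: $gx=x\neq ghx$; $ghx=x\neq gx$; $hx=x\neq gx$; $gx=hx=x$. With $\chi(k)=\mathbf 1_{kx\neq x}$, this gives
\[
c(g,h)=-\tfrac12\,\ori(x,gx,ghx)+\tfrac12\bigl(\chi(h)-\chi(gh)+\chi(g)\bigr)\ ,
\]
that is, $c=-\tfrac12\ori_x+\delta\bigl(\tfrac12\chi\bigr)$. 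In homogeneous form the primitive is $(\gamma_0,\gamma_1)\mapsto\tfrac12\mathbf 1_{\gamma_0x\neq\gamma_1x}$. This is a condition on $y_0,y_1$, not of the form $y_i=x$, which would break invariance. Consequently the ``$\ori'(\dots)$'' correction in your displayed formula is unnecessary. There are only two generic orderings of $(x,gx,ghx)$, not four, and on them $c=\tfrac12(1-\ori)$ exactly. Everything else, the constant $\tfrac12$ included, is absorbed by this single coboundary.

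One sentence in your proposal must be deleted. The $\ori(x,\cdot,\cdot)$ term is \emph{not} $\delta$ of $(\gamma_0,\gamma_1)\mapsto\ori(x,\gamma_0x,\gamma_1x)$ in the relevant complex, because that cochain is not $\Gamma$-invariant. The identity $\delta\ori(x,y_0,y_1,y_2)=0$ only exhibits $\ori_x$ as a coboundary in the non-invariant, acyclic complex. If it yielded an invariant primitive, $[\ori_\rho]$ would vanish for every $\rho$, for instance for Fuchsian actions, which is false. Rely only on the invariant primitive $\tfrac12\chi$ above.
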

	
		Let now $S$ be an oriented surface endowed with a hyperbolic metric $g$ and let $\Gamma\coloneqq \pi_1(S)$ be its fundamental group. Let $\rho_g\colon \Gamma\to \omeo$ be the associated circle action. It is well known that all ideal triangles in the hyperbolic plane have an area equal to $\pi$, hence, if $\widehat{\omega}_g\in Z\mathcal{L}^\infty_{\text{alt}}((S^1)^{3})^\Gamma$ is the strict cocycle defined in 
		Subsection~\ref{sec: bc as meas fun}, we get $\widehat{\omega}_g=\pi \cdot \ori$. Thanks to Proposition~\ref{proportionality}, this implies in turn that
	\begin{equation}\label{prop:eq}
		\theta([\omega_g])=\pi [\ori_{\rho_g}]=-2\pi e_b(\rho_g)\ . 
	\end{equation}

Let now $g_1,g_2$ be complete hyperbolic structures on the  surface $S$, and for $i=1,2$ let $\rho_i\colon \Gamma\to \omeo$ be the associated circle actions. Theorem~\ref{semi-conj:thm} ensures that $\rho_1$ is semi-conjugate to $\rho_2$. By Ghys' result on semi-conjugacy, it follows that they induce the same \emph{integral} bounded Euler class. 
		Since the real bounded Euler class is obtained by the integral bounded Euler class via the change of coefficients map, 
		the fact that $\rho_1$ is semi-conjugate to $\rho_2$ implies then that $e_b(\rho_1)=e_b(\rho_2)$, hence, by Equation~\eqref{prop:eq}, that
		$$
		\theta([\omega_{g_1}])=\theta([\omega_{g_2}])\ .
		$$
		This concludes the proof of Theorem~\ref{hyperbolic:equal}.

\section{Distinguishing the case of constant curvature}\label{nonhyp:sec}
We have shown in the previous section that the bounded area class cannot distinguish non-isometric hyperbolic structures on the same surface. 
However, in this section we prove that, at least for compact surfaces, it recognizes hyperbolic structures among all pinched negatively curved structures.
Let $S=\widetilde{S}/\Gamma$ be a closed surface endowed with a negatively curved metric $g$. Let $\rho_g\colon \pi_1(S)\to \textrm{Homeo}^+(\partial \widetilde{S})$ be 
the circle action associated to $g$, and recall that $e_b(\rho_g)\in H^2_b(\pi_1(S))$ is the real bounded Euler class of $\rho_g$.

The following result readily implies Theorem~\ref{recognizing:hyperbolic} from the introduction:

\begin{thm}\label{eulerhyp}
Let $S$ be a closed surface with negative Euler characteristic. We have
$$
\theta([\omega_g])=-2\pi e_b(\rho_g)
$$
if and only if $S$ is hyperbolic. 
Therefore, if $g_1$ and $g_2$ are   negatively curved Riemannian metrics on $S$ such that 
$[\omega_{g_1}]=[\omega_{g_2}]$ in $H^2_b(S)$, then $g_1$ is hyperbolic if and only if $g_2$ is hyperbolic.
\end{thm}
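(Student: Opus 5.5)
The "if" direction is already contained in Section~\ref{euler:sec}. If $g$ is hyperbolic, every ideal triangle of $\widetilde S$ has area $\pi$, so $\widehat\omega_g=\pi\cdot\ori$ pointwise. Then Equation~\eqref{prop:eq}, which combines Proposition~\ref{rep:boundary} and Proposition~\ref{proportionality}, gives $\theta([\omega_g])=-2\pi e_b(\rho_g)$. The second sentence of the statement is a consequence of the first once that is proved. By Theorem~\ref{semi-conj:thm} and Ghys' theorem, $e_b(\rho_{g_1})=e_b(\rho_{g_2})$. So if $[\omega_{g_1}]=[\omega_{g_2}]$ and $g_1$ is hyperbolic, then $\theta([\omega_{g_2}])=-2\pi e_b(\rho_{g_2})$, and $g_2$ is hyperbolic by the first part.

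For the "only if" direction, assume $\theta([\omega_g])=-2\pi e_b(\rho_g)=\pi[\ori_{\rho_g}]$ and set $c=\widehat\omega_g-\pi\cdot\ori$. Both terms are strict cocycles that are alternating, bounded, $\Gamma$-invariant and continuous on $\mathcal T$. For $\widehat\omega_g$ this is Proposition~\ref{properties:c}; for $\ori$ it is clear. Hence $c$ is such a strict cocycle too, and it represents $0\in H^2_b(\Gamma)$.

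I would then apply \cite[Proposition 3.1]{BurgerIozzi} exactly as in the proof of Proposition~\ref{key:prop} to conclude that $c$ vanishes on $\Lambda(\Gamma)^3$. Since $S$ is closed, $\Lambda(\Gamma)=\partial\widetilde S$. Therefore every ideal triangle of $\widetilde S$ has area exactly $\pi$.

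It remains to deduce from this that $K\equiv-1$, which I expect to be the main step. Gauss--Bonnet applied to an ideal triangle $T$ (angles $0$, geodesic sides) gives $\int_T(-K)\,dA=\pi$, justified by exhausting $T$ by compact geodesic triangles with angles tending to $0$. So the continuous bounded function $f=-K-1$ satisfies $\int_T f\,dA=0$ for every ideal triangle $T$.

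Define $\nu(x,y,z)=\int_{\Delta(x,y,z)}f\,dA$ (signed by orientation) for triples in $\widetilde S\cup\partial\widetilde S$, where $\Delta(x,y,z)$ is the geodesic triangle, possibly with ideal vertices. All such triangles have area at most $\pi/a^2$ and $f$ is bounded, so $\nu$ is well defined. By additivity of the integral, $\nu$ satisfies the cocycle identity $\delta\nu=0$ on $4$-tuples of points of $\widetilde S\cup\partial\widetilde S$.

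Now fix $p,q,r\in\widetilde S$ and a point $\xi\in\partial\widetilde S$. The cocycle identity gives
\[
\nu(p,q,r)=\nu(\xi,q,r)-\nu(\xi,p,r)+\nu(\xi,p,q).
\]
Applying the same identity to each term on the right with a second ideal point $\eta$, and then a third ideal point $\zeta$, rewrites $\nu(p,q,r)$ as a combination of values of $\nu$ on triples of ideal points. Each of these values is $0$; degenerate triples with repeated points contribute $0$ as well. Hence $\int_{\Delta}f\,dA=0$ for every compact geodesic triangle $\Delta$. Shrinking $\Delta$ to an arbitrary point and using the continuity of $f$ yields $f\equiv0$, that is $K\equiv-1$, so $g$ is hyperbolic.

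The delicate points are, first, checking the hypotheses of \cite[Proposition 3.1]{BurgerIozzi} for the difference cocycle $c$, and second, justifying the cocycle identity and Gauss--Bonnet for triangles with ideal vertices via finite-area exhaustion.
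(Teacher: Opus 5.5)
Your reduction to ``every ideal triangle of $\widetilde S$ has area $\pi$'' is the paper's argument. Applying \cite[Proposition 3.1]{BurgerIozzi} to $\widehat\omega_g-\pi\cdot\ori$ is the same as the paper's comparison of two strict representatives of one class, and your derivation of the second sentence of the statement also matches the paper. The gap is in your last step, which is where the real content lies: the cocycle identity cannot turn $\nu(p,q,r)$ into a combination of values of $\nu$ on ideal triples.

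Each use of $\delta\nu=0$ on a $4$-tuple changes the formal chain being evaluated by the boundary of a $3$-simplex. Such a rewriting would therefore be an identity of oriented simplicial chains $[p,q,r]=\sum_i n_i[\tau_i]+\partial C$, with the $\tau_i$ ideal triples. Applying $\partial$, the nonzero chain $[q,r]-[p,r]+[p,q]$ would equal $\sum_i n_i\partial[\tau_i]$, which is supported on edges between ideal points; this is absurd.

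You can see the failure already in your second step. Any $4$-tuple containing $(\xi,q,r)$ has $(w,q,r)$ as another face, so a term with two interior vertices always reappears. Put differently, let $b$ be any alternating function on pairs that vanishes on pairs of ideal points. Then $\nu+\delta b$ is still a cocycle vanishing on ideal triples, but its value at $(p,q,r)$ is arbitrary. Also, applying your putative universal identity to the signed hyperbolic area on $\overline{\mathbb H^2}$ would force every compact hyperbolic triangle to have area in $\pi\mathbb Z$.

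So the vanishing of $\int_T f\,dA$ on ideal triangles gives no formal information about finite triangles. An analytic input is needed to conclude $f=-K-1\equiv 0$. This is exactly \cite[Th\'eor\`eme 3.11]{barge1988surfaces}, which the paper invokes at this point. It rests on Barge--Ghys's injectivity of the map $\Omega^2(S)\to H^2_b(S)$, which they prove for closed negatively curved surfaces.

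A warning sign is that your final step never uses compactness of $S$. Combined with your earlier steps and Theorem~\ref{semi-conj:thm}, it would prove the statement for all pinched negatively curved surfaces of the first kind. The paper leaves that case as a conjecture precisely because the relevant injectivity result is not available in variable curvature.
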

\begin{proof}
We know from the previous section that,
if $g$ is hyperbolic, then $\theta([\omega_g])=\pi [\ori_{\rho_g}]=-2\pi e_b(\rho_g)$.

Conversely, suppose $\theta([\omega_g])=-2\pi e_b(\rho_g)$. By Proposition~\ref{proportionality} we deduce that
 both $\widehat{\omega}_g$ and $\pi \cdot \ori_{\rho_g}$ are strict cocycles
representing the class $\theta([\omega_g])$. Both these cocycles are continuous on $\mathcal{T}$, hence we may apply~\cite[Proposition 3.1]{BurgerIozzi} to conclude that
they coincide on $\Lambda(\Gamma)^3$. But $S$ is closed, hence $\Lambda(\Gamma)=\partial\widetilde{S}$ and thus $\widehat{\omega}_g=\pi\cdot \ori_{\rho_g}$ on 
the whole $(\partial \widetilde{S})^3$. This means that every ideal triangle in $\widetilde{S}$ has area equal to $\pi$. Now~\cite[Th\'eor\`eme 3.11]{barge1988surfaces} implies that $\widetilde{S}$, hence $S$, has constant curvature, whence the conclusion.

Finally, let $g_1$ and $g_2$ be  negatively curved Riemannian metrics on $S$ such that 
$[\omega_{g_1}]=[\omega_{g_2}]$ in $H^2_b(S)$. We have shown in the previous section that $e_b(\rho_{g_1})=e_b(\rho_{g_2})$, hence
$\theta([\omega_{g_1}])=-2\pi e_b(\rho_{g_1})$ if and only if $\theta([\omega_{g_2}])=-2\pi e_b(\rho_{g_2})$. This shows that $g_1$ is hyperbolic if and only if $g_2$ is hyperbolic,
and concludes the proof of the theorem.
\end{proof}

\begin{rem}\label{constant:rem}
It readily follows from the definitions that, if $\lambda>0$ is a rescaling factor and $g$ is any pinched negatively curved metric on $S$, then $[\omega_{\lambda\cdot g}]=\lambda^2 [\omega_g]$. Therefore, Theorem~\ref{eulerhyp} implies that  $\theta([\omega_g])$ is a multpile of $e_b(\rho_g)$ if and only if the metric $g$ has constant curvature.
\end{rem}

\begin{rem}
We conjecture that Theorem~\ref{recognizing:hyperbolic} should hold also under the weaker hypothesis that the limit sets of the holonomies of $g_1$ and $g_2$ are both  equal to the whole
$\partial \widetilde{S}$ (in which case, one says that $g_1$ and $g_2$ are \emph{of the first kind}). In fact, the key ingredient for the proof of Theorem~\ref{recognizing:hyperbolic} is~\cite[Th\'eor\`eme 3.11]{barge1988surfaces}, which in turn 
easily follows from Barge--Ghys' results on the injectivity of the map $\Psi_g\colon \Omega_b^2(S)\to H^2_b(S)$ associating to every bounded $2$-form
$\alpha\in \Omega_b^2(S)$ a bounded cohomology class in $H^2_b(S)$. The injectivity of the map $\Psi_g$ was proven by Barge and Ghys for negatively curved structures on closed surfaces, and has been recently extended in~\cite{DFH} to the case of non-compact hyperbolic surfaces of the first kind. In order to prove our conjecture, it would be sufficient
to extend the main result of~\cite{DFH} to the case of (pinched) non-constant negative curvature. However, this extension does not seem straightforward since the aforementioned result  heavily relies  on the Helgason-Fourier transform in the hyperbolic plane, which is not available in variable curvature.

On the contrary, Theorem~\ref{recognizing:hyperbolic} cannot hold for structures of the second kind: indeed, the bounded class associated to the area form of a surface $S$
only sees the geometry of the compact core of $S$ (see e.g.~\cite[Proposition 3.1]{DFH}). Therefore, 
if $g_1$ is a hyperbolic structure on $S$ with some funnel (i.e.~some infinite-volume end diffeomorphic to a cylinder), then we may slightly perturb $g_1$ in a funnel, thus obtaining a 
structure $g_2$ with non-constant curvature such that the convex core of $(S,g_1)$ and $(S,g_2)$ coincide. We would then have $[\omega_{g_1}]=[\omega_{g_2}]$ with $g_1$ hyperbolic and $g_2$ non-hyperbolic.
\end{rem}

\subsection{Hyperbolic structures minimize the norm of the bounded area class}
Thanks to Theorem~\ref{hyperbolic:equal}, for every oriented surface $S$ admitting a complete hyperbolic structure it makes sense to define the element $\omega_S\in H^2_b(S)$ 
by requiring that $\omega_S=[\omega_g]$ for some (and then any) complete hyperbolic structure $g$ on $S$. 
In order to prove Theorem~\ref{extremal:thm}, we will make use of the following result (which readily follows from~\cite{BIWsurvey}), which provides a useful characterization of the class $\omega_S$: 

\begin{thm}\label{extremality1}
Let $S$ be a closed oriented surface with negative Euler characteristic, and let $\alpha\in H^2_b(S)$ be such that
$$
\langle \alpha, [S]\rangle= 2\pi \cdot |\chi(S)|\ .
$$
Then $\|\alpha\|_\infty \geq \pi$ and, if the equality $\|\alpha\|_\infty=\pi$ holds,
then $\alpha=\omega_S$.
\end{thm}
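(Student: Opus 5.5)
The plan is to transfer the statement to group bounded cohomology and reduce it to the extremality theorem for the real bounded Euler class of Burger--Iozzi--Wienhard. Let $\Gamma=\pi_1(S)$. Gromov's isomorphism $\theta\colon H^2_b(S)\to H^2_b(\Gamma)$ is isometric and compatible with the comparison map to ordinary cohomology, so the Kronecker pairing with $[S]$ is preserved. Hence it suffices to work with $\theta(\alpha)$.

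Fix a hyperbolic metric $g_0$ on $S$ with circle action $\rho_0=\rho_{g_0}$. By Equation~\eqref{prop:eq}, $\theta(\omega_S)=-2\pi e_b(\rho_0)$. By Gauss--Bonnet, $\langle \omega_S,[S]\rangle=\mathrm{Area}(S,g_0)=2\pi|\chi(S)|$; the sign is positive because the straightening of a positively oriented fundamental cycle computes the oriented area. So $\langle e_b(\rho_0),[S]\rangle=-|\chi(S)|$, i.e.\ $\rho_0$ is maximal in the Milnor--Wood sense with respect to the appropriate orientation convention.

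Given $\alpha$ with $\langle\alpha,[S]\rangle=2\pi|\chi(S)|$, the lower bound is the standard duality argument. Gromov's computation gives $\|[S]\|_1=2|\chi(S)|$, and $|\langle\alpha,[S]\rangle|\le\|\alpha\|_\infty\|[S]\|_1$, so $\|\alpha\|_\infty\ge \pi$.

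For the rigidity part, suppose $\|\alpha\|_\infty=\pi$ and set $\beta=-\frac{1}{2\pi}\theta(\alpha)$. Then $\|\beta\|_\infty=\frac12$ and $\langle\beta,[S]\rangle=-|\chi(S)|$, so $\beta$ has exactly the norm and pairing of a maximal real bounded Euler class. I would invoke \cite[Theorem 4.14]{BIWsurvey}, the characterization of $e_b(\rho_0)$ as the unique class in $H^2_b(\Gamma)$ of norm $\frac12$ with maximal pairing against $[S]$. This gives $\beta=e_b(\rho_0)$, hence $\theta(\alpha)=-2\pi e_b(\rho_0)=\theta(\omega_S)$, and $\alpha=\omega_S$ since $\theta$ is injective.

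The main obstacle is the last step: making sure the cited extremality theorem applies to an arbitrary bounded class rather than only to Euler classes of representations, and getting its sign and normalization conventions to match. If the survey only states uniqueness among classes of the form $e_b(\rho)$, I would argue directly with the boundary model. Represent $\theta(\alpha)$ and $\theta(\omega_S)$ in the complex of Theorem~\ref{amenable_resolution}, where bounded cohomology in degree two injects into alternating $L^\infty$ cocycles since there are no nonzero invariant coboundaries of $L^\infty_{\rm alt}((\partial\widetilde{S})^2)^\Gamma$ by ergodicity. The norm equality forces the difference to pair to zero against a sequence of nearly norm-realizing cycles (straightened triangulations of finite covers). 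Via equidistribution of ideal triangles this forces the representative of $\theta(\alpha)$ to equal $\pi\cdot\ori$ almost everywhere, which is exactly the argument underlying \cite{BIWsurvey}. I would first check the precise statement in the survey and only fall back on this sketch if needed.
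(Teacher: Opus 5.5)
Your proposal is correct. For the rigidity part it is the paper's argument: both rest on \cite[Theorem 4.14]{BIWsurvey}, which does apply to arbitrary classes in $H^2_b(\Gamma)$, so your fallback sketch is not needed. That theorem is phrased through the transfer functional $t_b$, defined by $T_b(\beta)=t_b(\beta)\kappa^b_G$. It gives $\|\beta\|_\infty\ge t_b(\beta)/2$, hence $\|\beta\|_\infty\ge |t_b(\beta)|/2$ by linearity. In case of equality, $\beta$ is proportional to the pull-back of $\kappa^b_G$ to $\Gamma$.

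To turn this into your formulation you need the two further inputs the paper uses. The first is \cite[Theorem 4.9]{BIWsurvey}, which gives $t_b(\theta(\alpha))=\langle\alpha,[S]\rangle/|\chi(S)|$. The second is \cite[Proposition 4.7]{BIWsurvey}, which identifies the pull-back of $\kappa^b_G$ with $e_b(\rho_0)$. Equality of the pairings with $[S]$ then fixes the proportionality constant to $1$, exactly as you do.

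You genuinely differ from the paper in the lower bound. The paper reads $\|\alpha\|_\infty\ge\pi$ off the same transfer inequality. You instead use $|\langle\alpha,[S]\rangle|\le\|\alpha\|_\infty\|[S]\|_1$ together with Gromov's computation $\|[S]\|_1=2|\chi(S)|$, which is the same duality that yields the Milnor--Wood inequality. Your route is more elementary and independent of the machinery of \cite{BIWsurvey}. The paper's route has the economy of producing both the inequality and the rigidity statement from a single theorem, so the equality case is literally the equality case of the inequality being proved.
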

\begin{proof}
Let us fix a hyperbolic structure $g$ on $S$, i.e.~an identification between $S$ and $\mathbb{H}^2/\Gamma$, where $\Gamma\cong \pi_1(S)$ is a lattice
in $G=\isom^+(\mathbb{H}^2)$. Let then
 $t_b\colon H^2_b(\Gamma)\to \mathbb{R}$ be the functional defined in~\cite[Section 4.3]{BIWsurvey} (for the interested reader, 
for every $\beta\in H^2_b(\Gamma)$ the value of $t_b(\beta)$ is determined by the equality $T_b(\beta)=t_b(\beta)\cdot \kappa_G^b$, where 
$T_b\colon H^2_b(\Gamma)\to H^2_{cb}(G)$ is the transfer map, and $\kappa_G^b$ is the bounded K\"ahler class of $G$). Then~\cite[Theorem 4.9]{BIWsurvey}
implies that
$$
t_b(\theta(\alpha))=\frac{\langle \alpha, [S]\rangle}{|\chi(S)|}=2\pi \ .
$$
By~\cite[Theorem 4.14]{BIWsurvey}, this implies that
$$
\|\alpha\|_\infty=\|\theta(\alpha)\|_\infty\geq \frac{t_b(\theta(\alpha))}{2}=\pi\ .
$$
Moreover, if the equality $\|\alpha\|_\infty=\pi$ holds, then 
$\theta(\alpha)$ is proportional to the pull-back of $\kappa_G^b$ to $\Gamma$, which, by~\cite[Proposition 4.7]{BIWsurvey},
is equal to the bounded Euler class of the action $\rho_g$ of $\Gamma$ on $\partial \mathbb{H}^2$. Since $g$ is hyperbolic, we know that $\theta([\omega_g])=-2\pi e_b(\rho_g)$,
hence $\theta(\alpha)$ is proportional to $\theta([\omega_g])$, and $\alpha$ is proportional to $[\omega_g]$. But
$$
\langle \alpha, [S]\rangle= 2\pi \cdot |\chi(S)|=\text{Area}(S,g)=\langle [\omega_g], [S]\rangle\ ,
$$
hence the proportionality constant is equal to $1$ and $\alpha=[\omega_g]$.
\end{proof}

We are now ready to prove Theorem~\ref{extremal:thm}.
Let $g_1$ be a negatively curved metric on $S$ whose area is equal to $2\pi |\chi(S)|$.
Then $$\langle [\omega_{g_1}], [S]\rangle=2\pi |\chi(S)|\ ,$$
hence Theorem~\ref{extremality1}
implies that $\|[\omega_{g_1}]\|_\infty\geq \pi$,  and that the equality $\|[\omega_{g_1}]\|_\infty= \pi$ holds if and only if 
$[\omega_{g_1}]=[\omega_{g_2}]$ for some (and then any) hyperbolic metric $g_2$ on $S$. By Theorem~\ref{eulerhyp}, the condition
$[\omega_{g_1}]=[\omega_{g_2}]$ implies in turn that
$g_1$ is itself hyperbolic, thus concluding the proof of Theorem~\ref{extremal:thm}.

\subsection{Proof of Theorem~\ref{MCG:action}}
Theorem~\ref{MCG:action} is  an almost immediate consequence of Theorem~\ref{recognizing:hyperbolic}, together with a result from~\cite{bargagnati2024action}
regarding the action of the mapping class group of a closed surface on its bounded cohomology. In fact, let $S$
be a closed oriented surface, endowed with a negatively curved metric $g$ with non-constant curvature. 
By Proposition~\ref{properties:c}, the bounded class
$\theta([\omega_g])$ admits a representative in $\mathcal{L}((\partial \widetilde{S})^3)$ which is continuous on $\mathcal{T}$. Therefore, \cite[Theorem 4]{bargagnati2024action}
implies that the orbit of $[\omega_g]$ under the mapping class group of $S$ either coincides with $[\omega_g]$, in the case when $[\omega_g]$ is a multiple
of the bounded Euler class, or spans an infinite dimensional subspace of $H^2_b(\Gamma)$, in all the other cases.
But the curvature of $g$ is non-constant, hence Theorem~\ref{eulerhyp} ensures that
$[\omega_g]$ is not a multiple of the bounded Euler class. The conclusion follows.

\bibliography{math_papers}

\bibliographystyle{alpha}

\end{document}